\pgfplotsset{compat=1.16}
\numberwithin{equation}{section}
\newcounter{corr}
\definecolor{violet}{rgb}{0.580,0.,0.827}
\newcommand{\corr}[4][]{\typeout{Warning : a correction remains in page \thepage}
 \stepcounter{corr}
	      {\color{blue}\ifmmode\text{\,\sout{\ensuremath{#2}}\,}\else\sout{#2}\fi}
        {\color{green!50!black}#3}
        {\color{violet}#4}
}
\newcommand{\email}[1]{\href{mailto:#1}{#1}}
\newtheorem{theorem}{Theorem}
\newtheorem{lemma}[theorem]{Lemma}
\newtheorem{assumption}{Assumption}
\newtheorem{remark}{Remark}
\newcommand{\bmrm}[1]{{\bm{{\rm #1}}}}
\newcommand{\mat}[1]{\bmrm{#1}}
\newcommand{\bmn}{{\bm{n}}}
\newcommand{\bmv}{{\bm{v}}}
\newcommand{\bmx}{{\bm{x}}}
\newcommand{\calE}{\mathcal{E}}
\newcommand{\calF}{\mathcal{F}}
\newcommand{\calM}{\mathcal{M}}
\newcommand{\calT}{\mathcal{T}}
\newcommand{\bbN}{\mathbb{N}}
\newcommand{\bbP}{\mathbb{P}}
\newcommand{\bbR}{\mathbb{R}}
\newcommand{\bbZ}{\mathbb{Z}}
\newcommand{\rma}{{\rm{a}}}
\newcommand{\rmp}{{\rm{p}}}
\newcommand{\rms}{{\rm{s}}}
\newcommand{\eqs}{=}
\newcommand{\eq}{ ={}& }
\newcommand{\lea}{ \le{}& }
\newcommand{\les}{ \lesssim{}& }
\newcommand{\plus}{ &{}+ }
\newcommand{\minus}{ &{}- }
\newcommand{\nn}{\nonumber}
\newcommand{\nl}{\nn\\}
\newcommand{\defeq}{\vcentcolon=}
\newcommand{\deq}{ \defeq & \, }
\newcommand{\ul}[1]{\underline{#1}}
\newcommand{\ol}[1]{\overline{#1}}
\newcommand{\bdry}{\partial}
\newcommand{\subT}{{\text{$T$}}}
\newcommand{\subF}{{\text{$F$}}}
\newcommand{\subFT}{{\text{$\partial T$}}}
\newcommand{\subh}{{\text{$h$}}}
\newcommand{\xT}{{\bmrm{x},\subT}}
\newcommand{\xFT}{{\bmrm{x},\subFT}}
\newcommand{\xF}{{\bmrm{x},\subF}}
\newcommand{\xh}{{\bmrm{x},\subh}}
\newcommand{\subD}{{\text{$\Delta$}}}
\newcommand{\subN}{{\text{$\nabla$}}}
\newcommand{\Mh}[1][h]{\calM_{#1}}
\newcommand{\Th}[1][h]{\calT_{#1}}
\newcommand{\Fh}[1][h]{\calF_{#1}}
\newcommand{\Eh}[1][h]{\calE_{#1}}
\newcommand{\T}{{T}}
\newcommand{\F}{{F}}
\newcommand{\hT}{h_\T}
\newcommand{\bdryT}{{\bdry \T}}
\newcommand{\nor}{\bmn}
\newcommand{\norT}{\nor_{\bdryT}}
\newcommand{\norF}{\nor_{\F}}
\newcommand{\norTF}{\nor_{\T\F}}
\def\R{\bbR}
\def\Z{\bbZ}
\def\N{\bbN}
\newcommand{\POLY}[1]{\bbP^{#1}}
\newcommand{\POLYX}[1]{\bbP_{\bmrm{x}}^{#1}}
\newcommand{\POLYD}[1]{\bbP_{\subD}^{#1}}
\newcommand{\POLYN}[1]{\bbP_{\subN}^{#1}}
\newcommand{\HS}[1]{H^{#1}}
\newcommand{\HONE}{\HS{1}}
\newcommand{\HONEzr}{\HONE_0}
\newcommand{\LP}[1]{L^{#1}}
\newcommand{\LTWO}{\LP{2}}
\newcommand{\norm}[2][]{\|#2\|_{#1}}
\newcommand{\seminorm}[2][]{|#2|_{#1}}
\newcommand{\brac}[2][]{(#2)_{#1}}
\newcommand{\Brac}[2][]{\Big(#2\Big)_{#1}}
\newcommand{\energynorm}[1]{\|#1\|_{\a,\xh}}
\newcommand{\piT}[1]{\pi_\subT^{#1}}
\newcommand{\piTzr}[1]{\piT{0, #1}}
\newcommand{\piTe}[1]{\piT{1, #1}}
\newcommand{\piXTzr}[1]{\pi_{\subD,\subT}^{0,#1}}
\newcommand{\piXTe}[1]{\pi_{\xT}^{1,#1}}
\newcommand{\piXFzr}[1]{\pi_{\subN,\subF}^{0,#1}}
\newcommand{\piXFTzr}[1]{\pi_{\subN,\subFT}^{0,#1}}
\newcommand{\pXh}[1]{\rmp_{\xh}^{#1}}
\newcommand{\pXT}[1]{\rmp_{\xT}^{#1}}
\newcommand{\U}[2]{\ul{U}_{#1}^{#2}}
\newcommand{\UXTk}{\U{\xT}{k}}
\newcommand{\UXhk}{\U{\xh}{k}}
\newcommand{\UXhkzr}{\U{\xh,0}{k}}
\newcommand{\I}[2]{\ul{I}_{#1}^{#2}}
\newcommand{\Ih}[1]{\I{h}{#1}}
\newcommand{\Ihk}{\Ih{k}}
\newcommand{\IXTk}{\I{\xT}{k}}
\newcommand{\IXhk}{\I{\xh}{k}}
\def\a{\rma}
\newcommand{\aXT}{\a_{\xT}}
\newcommand{\sXT}{\rms_{\xT}}
\newcommand{\aXh}{\a_{\xh}}
\newcommand{\sXh}{\rms_{\xh}}
\newcommand{\uluh}{\ul{u}_h}
\newcommand{\vXT}{v_\xT}
\newcommand{\vXh}{v_\xh}
\newcommand{\vXF}{v_\xF}
\newcommand{\vXFT}{v_{\xFT}}
\newcommand{\uXT}{u_\xT}
\newcommand{\uXF}{u_\xF}
\newcommand{\ulvXT}{\ul{v}_\xT}
\newcommand{\uluXT}{\ul{u}_\xT}
\newcommand{\ulvXh}{\ul{v}_\xh}
\newcommand{\uluXh}{\ul{u}_\xh}
\newcommand{\deltaXT}[1]{\delta_\xT^{#1}}
\newcommand{\deltaXFT}[1]{\delta_{\xFT}^{#1}}
\newcommand{\locerrorRHS}[1]{\Big[\hT^{k+1}\seminorm[\HS{k+2}(T)]{#1}\Big]^2}
\newcommand{\logLogSlopeTriangle}[5]
{
	\pgfplotsextra
	{
		\pgfkeysgetvalue{/pgfplots/xmin}{\xmin}
		\pgfkeysgetvalue{/pgfplots/xmax}{\xmax}
		\pgfkeysgetvalue{/pgfplots/ymin}{\ymin}
		\pgfkeysgetvalue{/pgfplots/ymax}{\ymax}
		
		\pgfmathsetmacro{\xArel}{#1}
		\pgfmathsetmacro{\yArel}{#3}
		\pgfmathsetmacro{\xBrel}{#1-#2}
		\pgfmathsetmacro{\yBrel}{\yArel}
		\pgfmathsetmacro{\xCrel}{\xArel}
		
		\pgfmathsetmacro{\lnxB}{\xmin*(1-(#1-#2))+\xmax*(#1-#2)} 
		\pgfmathsetmacro{\lnxA}{\xmin*(1-#1)+\xmax*#1} 
		\pgfmathsetmacro{\lnyA}{\ymin*(1-#3)+\ymax*#3} 
		\pgfmathsetmacro{\lnyC}{\lnyA+#4*(\lnxA-\lnxB)}
		\pgfmathsetmacro{\yCrel}{\lnyC-\ymin)/(\ymax-\ymin)}
		
		\coordinate (A) at (rel axis cs:\xArel,\yArel);
		\coordinate (B) at (rel axis cs:\xBrel,\yBrel);
		\coordinate (C) at (rel axis cs:\xCrel,\yCrel);
		
		\draw[#5]   (A)-- node[pos=0.5,anchor=north] {\scriptsize{1}}
		(B)-- 
		(C)-- node[pos=0.,anchor=west] {\scriptsize{#4}} 
		cycle;
	}
}
\numberwithin{equation}{section}
\begin{document}	

\title{Design and analysis of the Extended Hybrid High-Order method for the Poisson problem}
\author{Liam Yemm}
\affil{School of Mathematics, Monash University, Melbourne, Australia, \email{liam.yemm@monash.edu}}
\maketitle

\begin{abstract}
  We propose an Extended Hybrid High-Order scheme for the Poisson problem with solution possessing weak singularities. Some general assumptions are stated on the nature of this singularity and the remaining part of the solution. The method is formulated by enriching the local polynomial spaces with appropriate singular functions. Via a detailed error analysis, the method is shown to converge optimally in both discrete and continuous energy norms. Some tests are conducted in two dimensions for singularities arising from irregular geometries in the domain. The numerical simulations illustrate the established error estimates, and show the method to be a significant improvement over a standard Hybrid High-Order method.
  \medskip\\
  \textbf{Key words:} Hybrid High-Order methods, enriched scheme, error analysis, singular solution, polytopal meshes. 
  \medskip\\
  \textbf{MSC2010:} 65N12, 65N15, 65N30.
\end{abstract}

	
\section{Introduction}
This century has seen a growing interest in so called polytopal methods for the discretisation of elliptic problems. These methods extend the classical Finite Element method to generic polytopal grids. A short list of such methods includes Discontinuous Galerkin and Hybridizable Discontinuous Galerkin methods \cite{di-pietro.ern:2011:mathematical,cangiani.dong.ea:2017:discontinuous,cockburn.dong.ea:2009:hybridizable}, Multi-Point Flux Approximation finite volume methods \cite{aavatsmark.eigestad.ea:2008:compact}, Hybrid Mimetic Mixed methods \cite{droniou.eymard:2010:unified}, Virtual Element methods \cite{beirao-da-veiga.brezzi.ea:2013:basic,ahmad.alsaedi.ea:2013:equivalent,brezzi.falk.ea:2014:basic,cangiani.manzini.ea:2017:conforming}, weak Galerkin methods \cite{mu.wang.ea:2015:weak}, and polytopal Finite Elements \cite{sukumar.tabarraei:2004:conforming}. The Hybrid High-Order (HHO) method is a recent addition to these techniques, and also benefits from being of arbitrary order and dimension independent. Additionally, HHO methods allow for static condensation of the system matrix, and depend on polynomial reconstructions which account for the physics of the problem. For a comprehensive discussion of the method and its applications we refer the interested reader to the monograph \cite{di-pietro.droniou:2020:hybrid}. 

Hybrid High-Order methods have been used to model diffusion and diffusion-advection-reaction equations \cite{di-pietro.ern.ea:2014:arbitrary,di-pietro.droniou.ea:2014:discontinuous}, elasticity problems \cite{di-pietro.ern:2015:hybrid, botti.di-pietro:2017:hybrid}, Leray Lions and p-Laplace equations \cite{di-pietro.droniou:2017:hybrid,di-pietro.droniou:2020:hybrid}, and the Stokes and Navier-Stokes equations \cite{di-pietro.ern.ea:2016:discontinuous,botti.di-pietro.ea:2019.hybrid,di-pietro.droniou:2020:hybrid}. The method has also been shown to be robust on highly irregular grids consisting of elements with arbitrarily many small faces \cite{droniou.yemm:2022:robust} and on skewed meshes such that each element possesses a linear map to an an isotropic element \cite{droniou:2020:interplay}. All of these schemes, however, rely on error estimates which assume certain regularity of the exact solution. This is typical of high-order approximations which require high-order regularity to obtain optimal approximation rates. On non-smooth domains (such as regions with non-convex corners or those possessing cracks) it is expected that the exact solution to elliptic problems will contain weak singularities \cite{grisvard:1985:elliptic}. This lack of regularity is well documented in the finite element literature and is typically overcome through enriched approximations based on the partition of unity method \cite{melenk.babuvska:1996:partition,babuvska.melenk:1997:partition}. The extended Finite Element method \cite{belytschko.black:1999:elastic,moes.dolbow:1999:finite} is one such method, originally designed to handle discontinuities in crack growth models. In particular, by enriching the local spaces with basis functions that are discontinuous across the crack, the method allows for optimal approximation without the need for mesh refinement near the discontinuity. A similar approach was taken for polygonal finite elements \cite{zamani.eslami:2011:embedded}. More recently, an enriched Virtual Element method for the Poission problem was designed in \cite{benvenuti.chiozzi.ea:2019:extended}, however no estimates of the error were given. Following this work, the same authors have proposed in \cite{benvenuti.chiozzi.ea:2019:extended} an enriched VEM for a linear elasticity fracture problem. The Hybrid High-Order method is closely linked (c.f. \cite[Section 5.5]{di-pietro.droniou:2020:hybrid}) to the nonconforming Virtual Element method (NCVEM) \cite{de-dios.lipnikov.ea:2016:nonconforming}. The recent article \cite{artioli.mascotto:2021:enrichment} designs an enriched NCVEM for harmonic singularities arising from irregular domains in two dimensions. Moreover, the enriched NCVEM is capable of handling highly irregular harmonic singularities, including those arising from cracked domains. While the assumptions we make in Assumption \ref{assum:regularity} do not cover cracked domains, the method presented here is robust for all other boundary singularities and has the particular advantage of not requiring the singular functions to be harmonic. This can be particularly useful if the irregularity of the problem is due to singularities in the source term, and not due to non-smooth domains. We also note that the analysis carried out in this paper does not require that inverse inequalities hold for the enriched polynomial spaces.

In this paper we propose an Extended Hybrid High-Order (XHHO) method for the Poisson problem. The work is presented dimension independent, and capable of handling arbitrary singular functions satisfying Assumption \ref{assum:regularity}. Specifically, we assume the exact solution consists of a `weakly singular' part lying in a finite dimensional singular space. The local polynomial spaces on the mesh elements and faces are enriched with the appropriate singular space. By then adjusting the local projectors and potential reconstruction accordingly, an optimal XHHO scheme is developed that mimics the standard method \cite[Section 2]{di-pietro.droniou:2020:hybrid}. However, the analysis is far more involved as we can no longer rely on the Lebesgue/Sobolev embeddings and discrete trace inequalities that apply to polynomial spaces. In Section \ref{sec:analysis} we provide a thorough error analysis of the scheme under minimal regularity assumptions. The paper is concluded with an analysis of the choice of stabilisation term in Section \ref{sec:stab} and a discussion on the implementation of the scheme, its numerical limitations and some benchmark tests in Section \ref{sec:numerical}.

\subsection{Model Problem}
Let us consider the typical Dirichlet problem in a polytopal domain \(\Omega\subset\R^d\), \(d \ge 2\):
find \(u\in\HONEzr(\Omega)\) such that
\begin{equation}\label{eq:weak.form}	
	\a(u,v) = \ell(v),\qquad\forall\,v\in \HONEzr(\Omega) ,
\end{equation}
where \(\a(u,v) \defeq \brac[\Omega]{\nabla u, \nabla v}\), \(\ell(v)\defeq\brac[\Omega]{f,v}\) for some \(f\in \LTWO(\Omega)\). Here and in the following, \(\brac[X]{\cdot, \cdot} \) is the \(\LTWO\)-inner product of scalar- or vector-valued functions on a set \(X\) for its natural measure. To ease the analysis we consider only homogeneous Dirichlet boundary conditions in this paper. However, the scheme extends quite naturally to more general Dirichlet and Neumann problems. Such an extension is outlined in \cite[Section 2.4]{di-pietro.droniou:2020:hybrid}. 
	

Consider a partition of the domain \(\Omega\) into a mesh \(\Mh=( \Th,\Fh)\) where the set of mesh elements \(\Th\) are a set of disjoint polytopes such that \(\ol{\Omega} = \bigcup_{T\in\Th}\ol{T}\) and the set of mesh faces \(\Fh\) form the mesh skeleton \(\bigcup_{T\in\Th}{\bdryT} = \bigcup_{F\in\Fh}\ol{F}\). A detailed definition of this structure can be found in \cite[Definition 1.4]{di-pietro.droniou:2020:hybrid}. The parameter $h$ denotes the maximal element diameter $h\defeq\max_{T\in\Th}\hT$ where, for $X=T\in\Th$ or $X=F\in\Fh$, $h_X$ denotes the diameter of $X$. We shall also collect the set of faces attached to an element $T\in\Th$ in the set $\mathcal{F}_T:=\{F\in\Fh:F\subset T\}$. Similarly, the set containing the one or two elements attached to a face \(F\in\Fh\) is defined as \(\Th[F] \defeq \{T\in\Th:F\subset\bdryT\}\). For each \(T\in\Th\) we denote by \(\norT\) the unit normal directed out of \(T\), and its restriction to a face \(F\in\Fh[T]\) is given by \(\norTF = \norT|_F\). We further make the following assumption on the mesh inline with that stated in \cite{droniou.yemm:2022:robust}.	

\begin{assumption}[Connected by star-shaped sets]\label{assum:star.shaped}	
	There exists a constant \(\varrho>0\) such that for every \(h\in\mathcal{H}\), each \(T\in\Th\) and \(F\in\Fh\) are connected by star-shaped sets with parameter \(\varrho\) (see \cite[Definition 1.41]{di-pietro.droniou:2020:hybrid}). 
\end{assumption}

It is worth noting that Assumption \ref{assum:star.shaped} is independent of the size of and number of faces in each mesh element. Thus, as in \cite{droniou.yemm:2022:robust}, all error estimates in this work remain robust with respect to small and numerous faces.
	
A typical Hybrid High-Order discretisation of problem \eqref{eq:weak.form} relies on piecewise \(H^{k+2}\)-regularity of the solution where \(k\ge0\) is the polynomial degree of the face unknowns. We consider here an exact solution of the form  \( u = u_r + \tilde{u} \) where $u_r$ denotes the `regular part' and $\tilde{u}\in W(\Th)$ where $ W(\Th)$ is a finite dimensional, `weakly singular', enrichment space such that the following assumption holds.

\begin{assumption}[Assumptions on the enrichment space]\label{assum:regularity}
	We assume that every \(\psi\in W(\Th)\) satisfies the following conditions:
	\begin{enumerate}[label=\normalfont(\textbf{A\arabic*}),ref=\normalfont(\textbf{A\arabic*})]
		\item\label{item:A1} \( \forall T\in \Th\), \(\psi \in H^1(T)\),
		\item\label{item:A2} \( \forall T\in \Th\), \(\Delta \psi \in L^2(T)\),
		\item\label{item:A3} \( \forall T\in \Th\), \(\forall F\in \Fh[T]\), \(\nabla \psi \cdot \norTF \in L^2(F)\).
	\end{enumerate}
\end{assumption}

As mentioned, we assume throughout this paper that the exact solution to \eqref{eq:weak.form} can be written as the sum of a regular part and an element of \(W(\Th)\). More specifically, for some \(k\in\N\) we define the solution space to be
\begin{equation}\label{eq:solution.space}	
	\mathcal{V}^{k+2}(\Omega) \defeq \{w\in H^1_0(\Omega): \Delta w \in L^2(\Omega),\, w \in H^{k+2}(\Th) + W(\Th)\},
\end{equation}
where we denote by \(H^{k+2}(\Th)\) the broken Sobolev space
\begin{equation}
	H^{k+2}(\Th) \defeq \{w\in L^{2}(\Omega):w|_T\in H^{k+2}(T)\ \forall T\in\Th\}, \nn
\end{equation}
and assume that the exact solution satisfies $u\in\mathcal{V}^{k+2}(\Omega)$. We note here that Assumption \ref{assum:regularity} does not require the enrichment function to be harmonic. Such a case is considered in Section \ref{sec:oscillatory}.
\begin{remark}
	We note that the conditions on both the regular part and the singular part are purely local and the only global conditions enforced on the exact solution are those of \(u\in \HONEzr(\Omega)\) and \(\Delta u \in \LTWO(\Omega)\). 
\end{remark}

\section{Discrete Problem}

On each $T\in\Th$ we define the discrete space
\begin{equation}\label{def:extended.space}
	\POLYX{k+1}(T) \defeq \POLY{k+1}(T) +  W(T),	
\end{equation}
where \(W(T)\) denotes the restriction to \(T\) of \(W(\Th)\), and \(\POLY{k+1}(T)\) denotes the space of polynomials on \(T\) of degree no more than \(k+1\), \(k\in\N\). The extended elliptic projector on the space \(\POLYX{k+1}(T)\) is defined as the unique \(\piXTe{k+1}:H^1(T)\to\POLYX{k+1}(T)\) such that for all \(v\in\HONE(T)\)
\begin{equation}\label{eq:elliptic.projector}
	\brac[T]{\nabla(v-\piXTe{k+1} v),\nabla w} = 0,\qquad\forall\,w\in\POLYX{k+1}(T)
\end{equation}
and
\begin{equation}\label{eq:elliptic.projector.closure}
	\brac[T]{v-\piXTe{k+1}  v,1} = 0.
\end{equation}
	
Analogous to \cite[Section 2.1]{di-pietro.droniou:2020:hybrid}, we wish to define a discrete space \(\UXTk\), a reconstruction operator \(\pXT{k+1} :\UXTk\to\POLYX{k+1}(T)\), and an interpolator \(\IXTk:H^1(T)\to\UXTk\) such that \(\pXT{k+1} \circ\IXTk = \piXTe{k+1} \). Due to the regularity assumptions on the enrichment space the following integration by parts formula holds for all \(v\in H^1(T)\) and \(w\in\POLYX{k+1}(T)\),
\begin{equation}\label{eq:ibp}
	\brac[T]{\nabla v, \nabla w} = -\brac[T]{v, \Delta w} + \brac[\bdryT]{v, \nabla w \cdot \norT}.
\end{equation}
To introduce projectors to \eqref{eq:ibp} we first define the local discrete spaces
\begin{equation}\label{eq:element.and.face.spaces}
	\POLYD{k}(T) := \POLY{k}(T) + \Delta  W(T) \quad\textrm{and}\quad \POLYN{k}(F) := \POLY{k}(F) + \sum_{T\in\Th[F]}\nabla  W(T)\cdot\norTF.
\end{equation}

\begin{remark}
	A common application of the method designed in this paper is to irregularities in the solution arising from corners in the domain. In such cases, the singular functions considered are harmonic (see Section \ref{sec:numerical}) and the space $\POLYD{k}(T)$ coincides with the polynomial space $\POLY{k}(T)$.
\end{remark}

\begin{remark}
	Much of the time, the gradient of an enrichment function \(\psi \in W(\Th)\) is continuous across each face \(F\in\Fh\) and the definition of \(\POLYN{k}(F)\) is equivalent to 
	\begin{equation}
		\POLYN{k}(F) \defeq \POLY{k}(F) + \nabla  W(\Th)\cdot\norF \nn
	\end{equation}
	for an arbitrary normal \(\norF\) to the face \(F\). However, definition \eqref{eq:element.and.face.spaces} is still well defined for discontinuous enrichment functions. As the space $\POLY{k}(F)$ is defined independent of any particular $T\in\Th$, it is essential in such cases to include the Neumann traces from both elements $T\in\Th[F]$ attached to the face $F$. This is particularly useful when considering locally enriched schemes. Further discussion on this matter is given in Section \ref{sec:local.enrichment}.
\end{remark} 
	
The discrete broken space on an element boundary is defined as 
\[
	\POLYN{k}(\Fh[T]) \defeq \{v\in L^1(\bdryT) : v|_F\in \POLYN{k}(F)\quad\forall F\in\Fh[T]\}.
\] 
It follows from Assumptions \ref{item:A2} and \ref{item:A3} that
\begin{equation}
	\POLY{k}(T) \subset \POLYD{k}(T) \subset L^2(T) \quad\textrm{and}\quad \POLY{k}(\Fh[T]) \subset \POLYN{k}(\Fh[T]) \subset L^2(\bdryT). \nn
\end{equation}	

We denote by \(\piXTzr{k}\) and \(\piXFTzr{k}\) the \(\LTWO\)-orthogonal projectors on \(\POLYD{k}(T)\) and \(\POLYN{k}(\Fh[T])\) respectively. 
Thus, it follows from \eqref{eq:elliptic.projector} and \eqref{eq:ibp}, as well as the inclusions \(\Delta \POLYX{k+1}(T) \subset \POLYD{k}(T)\) and \(\nabla \POLYX{k+1}(T)\cdot\norT \subset \POLYN{k}(\Fh[T])\), that for all \(v\in H^1(T)\),
\begin{equation}\label{eq:elliptic.projector.formula}
	\brac[T]{\nabla \piXTe{k+1} v, \nabla w} = -\brac[T]{\piXTzr{k} v, \Delta w} + \brac[\bdryT]{\piXFTzr{k} v, \nabla w \cdot \norT}\qquad\forall w\in\POLYX{k+1}(T). 
\end{equation}
\subsection{Local Space}
The local space of unknowns is defined to be
\begin{equation}\label{def:local.space}
	\UXTk \defeq \POLYD{k}(T) \times \POLYN{k}(\Fh[T]). 
\end{equation}
For all \( \ulvXT=(\vXT,\vXFT)\in\UXTk \), the reconstruction operator \( \pXT{k+1} :\UXTk\to\POLYX{k+1}(T) \) is defined by
\begin{equation}\label{eq:reconstruction}
	\brac[T]{\nabla \pXT{k+1} \ulvXT, \nabla w} = -\brac[T]{\vXT, \Delta w} + \brac[\bdryT]{\vXFT, \nabla w \cdot \norT} \qquad\forall\,w\in\POLYX{k+1}(T)
\end{equation}
and
\begin{equation}\label{eq:reconstruction.closure}
	\brac[T]{\vXT-\pXT{k+1} \ulvXT,1} = 0.
\end{equation}
Naturally, the interpolator is defined as \( \IXTk v \defeq \brac{\piXTzr{k} v, \piXFTzr{k} v} \). By comparing equations \eqref{eq:elliptic.projector.formula} and \eqref{eq:reconstruction} as well as the closure condition \eqref{eq:reconstruction.closure} we observe the desired commutation property
\begin{equation}\label{eq:commutation}
	\pXT{k+1} \circ\IXTk = \piXTe{k+1} .
\end{equation} 
Due to the \(L^2\)-regularity of the unknown spaces, the local reconstruction and interpolator mimic those defined in \cite{di-pietro.droniou:2020:hybrid}, and the formulation of the discrete problem follows a standard procedure (albeit with non-standard analysis). The continuous form $\a(\cdot,\cdot)$ is approximated on each element by the discrete form $\aXT:\UXTk\times\UXTk\to\R$ defined by
\begin{equation}
	\aXT(\uluXT,\ulvXT) \defeq \brac[T]{\nabla \pXT{k+1} \uluXT,\nabla \pXT{k+1} \ulvXT}+\sXT(\uluXT,\ulvXT), \nn
\end{equation}
where $\sXT:\UXTk\times\UXTk\to\R$ is a stabilisation term satisfying the following assumption. From hereon we shall write \(f\lesssim g\) to mean \(f \le Cg\) where \(C\) is a constant depending only on \(\Omega\), \(k\) and the mesh regularity parameter \(\varrho\).

\begin{assumption}[Local stabilisation term]\label{assum:stability}
	The stabilisation term $\sXT$ is a symmetric, positive semi-definite bilinear form that satisfies:
	\begin{enumerate}
		\item \emph{Coercivity.} For all \(\ulvXT\in\UXTk\) it holds that
		\begin{equation}\label{eq:stab.vol.coercivity}
			\hT^{-2}\norm[T]{\vXT-\pXT{k+1} \ulvXT}^2 \lesssim  \aXT(\ulvXT,\ulvXT) 
		\end{equation}
		and
		\begin{equation}\label{eq:stab.bdry.coercivity}
			\hT^{-1}\norm[\bdryT]{\vXFT-\pXT{k+1} \ulvXT}^2 \lesssim  \aXT(\ulvXT,\ulvXT). 
		\end{equation}
		\item \emph{Consistency.} For all \(w=w_r + \psi\) where \(w_r\in H^{k+2}(T)\) and \(\psi\in W(T)\), it holds that
		\begin{equation}\label{eq:stab.consistency}
			\sXT(\IXTk w, \IXTk w) \lesssim \locerrorRHS{w_r}. 
		\end{equation}
	\end{enumerate}
\end{assumption}

\begin{remark}
	For the case of regular, polynomial unknowns, the two coercivity conditions \eqref{eq:stab.vol.coercivity} and \eqref{eq:stab.bdry.coercivity} are equivalent to the single coercivity condition stated in \cite[Assumption 2]{droniou.yemm:2022:robust}. However, we have to consider the two conditions here to account for the lack of regularity of the solution.
\end{remark}

Some examples of stabilisation terms satisfying Assumption \ref{assum:stability} are given in Section \ref{sec:stab}.

The definition \eqref{eq:elliptic.projector} of the extended elliptic projector and the consistency \eqref{eq:stab.consistency} infer for all \(\psi \in W(T)\) the identities
\begin{equation}\label{eq:psi.invariance}
	\piXTe{k+1} \psi = \psi \qquad\textrm{and}\qquad \sXT(\IXTk\psi, \IXTk\psi) = 0, 
\end{equation}
which, together with \eqref{eq:commutation} and the definition of \(\aXT\), yield
\begin{equation}\label{eq:consistency.on.psi}
	\aXT(\IXTk\psi, \IXTk v) = \a(\psi, v)\qquad\forall v \in H^1(T).
\end{equation}
Equation \eqref{eq:consistency.on.psi} shows the scheme to be exact on elements of \(W(T)\). More generally, \(\psi\) can be replaced by an arbitrary \(w\in\POLYX{k+1}(T)\) in equations \eqref{eq:psi.invariance} and \eqref{eq:consistency.on.psi} which, as in standard HHO, shows consistency on the discrete space.

\subsection{Global Space}
The global space of unknowns is defined as 
\begin{align}\label{def:global.space}
	\UXhkzr\defeq \Big\{\ulvXh=((\vXT)_{T\in\Th},(\vXF)_{F\in\Fh})\,:\,\vXT\in\POLYD{k}(T)\quad\forall T\in\Th, \nl
	\vXF\in\POLYN{k}(F)\quad\forall F\in\Fh\,,\vXF=0\quad\forall F \subset \partial \Omega \Big\}.
\end{align}
For any \(\ulvXh\in\UXhkzr\)  we denote its restriction to an element \(T\) by \(\ulvXT = (\vXT , \vXFT ) \in \UXTk\) (where,
naturally, \(\vXFT\) is defined from \((\vXF)_{F\in\Fh[T]}\)). We also denote by \(\vXh\) the piecewise function satisfying \(\vXh|_T=\vXT\) for all \(T\in\Th\). The global problem reads: find \(\uluXh\in\UXhkzr\) such that
\begin{equation}
	\label{eq:discrete.problem}
	\aXh(\uluXh, \ulvXh) = \brac[\Omega]{f,\vXh}\qquad\forall \ulvXh\in \UXhkzr,
\end{equation}
where 
\begin{equation}
	\aXh(\uluXh, \ulvXh) \defeq \sum_{T\in\Th}\aXT(\uluXT, \ulvXT). \nn
\end{equation}
We endow the global space \(\UXhkzr\) with the norm
\begin{equation}
	\energynorm{\ulvXh} \defeq \aXh(\ulvXh, \ulvXh)^\frac12. \nn
\end{equation}
We shall also denote by \(\IXhk\) and \(\pXh{k+1}\) the global operators whose restrictions to an element \(T\) are given by  \(\IXTk\) and \(\pXT{k+1}\) respectively. The global stabilisation term is defined as
\begin{equation}
	\sXh(\uluXh, \ulvXh) \defeq \sum_{T\in\Th}\sXT(\uluXT, \ulvXT). \nn
\end{equation}

\section{Error Analysis}\label{sec:analysis}

Providing a robust error analysis is often the pitfall of enriched schemes as many tools available for polynomial spaces (such as discrete trace and inverse inequalities) no longer apply. The lack of \(H^1\)-regularity of the element unknowns in \(\POLYD{k}\) means we cannot consider their gradients or traces. We are also restricted to considering approximation properties of the extended elliptic projector only in the \(H^1\)-seminorm. Despite these shortcomings, we are able to show consistency of the scheme and provide estimates for the discrete and continuous energy errors. 

We first make note of the following continuous trace inequality	(c.f. \cite{droniou.yemm:2022:robust}): for all \(T\in\Th\), \(v\in H^1(T)\),
\begin{equation}
\label{eq:trace.continuous}
\hT\norm[\bdryT]{v}^2 \lesssim \norm[T]{v}^2 + \hT^2\seminorm[H^1(T)]{v}^2. 
\end{equation}
The following lemma is also crucial to the consistency of the scheme.

\begin{lemma}[Characterisation of \(\mathcal{V}^{k+2}(\Omega)\)]
	For all \(w\in\mathcal{V}^{k+2}(\Omega)\), \(F\in\Fh\), \(F\not\subset\partial \Omega\),
	\begin{equation}\label{eq:gradient.regularity}
		\nabla w \cdot \bmn_{T_1 F} + \nabla w \cdot \bmn_{T_2 F} = 0,
	\end{equation}
	where \(\{T_1,T_2\} = \Th[F]\).
\end{lemma}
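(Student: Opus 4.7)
The plan is to derive \eqref{eq:gradient.regularity} by matching a global and an elementwise integration by parts. Fix an interior face $F\in\Fh$ with $\{T_1,T_2\}=\Th[F]$ and choose a test function $\phi\in C_c^\infty(\Omega)$ whose support meets $\bdry T_i$ only inside the relative interior of $F$. Writing $w=w_r+\psi$ per the characterisation of $\mathcal{V}^{k+2}(\Omega)$, the piecewise regularity $w_r|_{T_i}\in H^{k+2}(T_i)\subset H^2(T_i)$ together with Assumption \ref{assum:regularity} ensures that $\nabla w|_{T_i}\cdot\bmn_{T_i F}\in \LTWO(F)$ for $i=1,2$, so the jump appearing in \eqref{eq:gradient.regularity} is a well-defined element of $\LTWO(F)$.

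Next I would compute $\brac[\Omega]{\nabla w,\nabla \phi}$ in two different ways. Since $w\in\HONEzr(\Omega)$ and $\Delta w\in\LTWO(\Omega)$, the global integration by parts gives
\[
\brac[\Omega]{\nabla w,\nabla \phi}=-\brac[\Omega]{\Delta w,\phi}.
\]
On the other hand, decomposing the left-hand side as $\sum_{i=1,2}\brac[T_i]{\nabla w,\nabla \phi}$ and applying \eqref{eq:ibp} on each $T_i$ --- justified by \ref{item:A1}--\ref{item:A3} for $\psi$ and by $H^2$-regularity for $w_r$ --- only the contribution on $F$ survives in the boundary terms, since $\phi$ vanishes on $\bdry T_i\setminus F$. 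This produces
\[
\brac[\Omega]{\nabla w,\nabla \phi}=-\brac[\Omega]{\Delta w,\phi}+\brac[F]{\nabla w\cdot\bmn_{T_1 F}+\nabla w\cdot\bmn_{T_2 F},\phi}.
\]

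Comparing the two identities yields $\brac[F]{\nabla w\cdot\bmn_{T_1 F}+\nabla w\cdot\bmn_{T_2 F},\phi}=0$ for every admissible $\phi$. Since the restrictions to $F$ of such test functions are dense in $\LTWO(F)$, and the jump lies in $\LTWO(F)$ by the first paragraph, it must vanish almost everywhere on $F$, establishing \eqref{eq:gradient.regularity}. The main obstacle is justifying the elementwise integration by parts \eqref{eq:ibp} for the singular part $\psi$, which lacks classical $H^2$-regularity; this is precisely what Assumption \ref{assum:regularity} is designed for, providing an $\HONE$-function with $\LTWO$-Laplacian and well-defined $\LTWO$-normal traces on element faces, exactly the regularity needed for Green's formula to hold as a duality pairing across each $T_i$.
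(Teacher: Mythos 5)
Your proof is correct and is essentially the paper's own argument written out in full: the paper simply defers to \cite[Lemma 1.24]{di-pietro.ern:2011:mathematical} after noting the same three regularity facts ($w\in H^1(\Omega)$, $\Delta w\in L^2(\Omega)$, $\nabla w\cdot\norT\in L^2(\bdryT)$), and the proof of that cited lemma is precisely your comparison of the global and elementwise integrations by parts against test functions supported near the interior of $F$, followed by density in $L^2(F)$.
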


\begin{proof}
	The proof is analogous to that of \cite[Lemma 1.24]{di-pietro.ern:2011:mathematical} once we note that for all \(w\in\mathcal{V}^{k+2}(\Omega)\) we have: \(\Delta w \in L^2(\Omega)\), \(w \in H^1(\Omega)\), and \(\nabla w \cdot \norT \in L^2(\bdryT)\) for all \(T\in\Th\).
\end{proof} 

\begin{theorem}[Consistency Error]\label{thm:consistency.error}
	Let \( w = w_r + \psi \in \mathcal{V}^{k+2}(\Omega)\) with \(w_r\in H^{k+2}(\Th)\) and \(\psi\in  W(\Th)\). The consistency error is given by the linear form \(\Eh (w;\cdot):\UXhkzr\to\R\) defined for all \(\ulvXh\in\UXhkzr\) via
	\begin{equation}
		\Eh (w;\ulvXh) \defeq -\brac[\Omega]{\Delta w, \vXh} - \aXh(\IXhk w,\ulvXh). \nn
	\end{equation}
	The consistency error satisfies
	\begin{equation}\label{eq:error.consistency}
		\big| \Eh (w;\ulvXh) \big| \lesssim \energynorm{\ulvXh} h^{k+1}\seminorm[H^{k+2}(\Th)]{w_r}.
	\end{equation}
\end{theorem}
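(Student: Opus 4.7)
The plan is to (i) rewrite $\Eh(w;\ulvXh)$ through integration by parts into a form exposing local projector errors, (ii) eliminate the enrichment $\psi$ from the resulting expression via a ``discrete integration by parts'' identity, and (iii) bound the remaining regular part using classical $L^2$-projection estimates combined with the coercivity assumptions on $\sXT$.

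First I would expand $\aXT(\IXTk w,\ulvXT)$: the commutation \eqref{eq:commutation} combined with the defining property \eqref{eq:elliptic.projector} of the elliptic projector (applied to $\pXT{k+1}\ulvXT\in\POLYX{k+1}(T)$) rewrites the gradient inner product as $\brac[T]{\nabla w,\nabla\pXT{k+1}\ulvXT}$. An integration by parts, justified by $\Delta w\in L^2(T)$ and $\nabla w\cdot\norT\in L^2(\bdryT)$ (both encoded in $w\in\mathcal{V}^{k+2}(\Omega)$), transfers the derivatives onto $\pXT{k+1}\ulvXT$. Summing over elements and using the jump condition \eqref{eq:gradient.regularity} together with $\vXF=0$ on $\partial\Omega$ to insert $\vXFT$ for free in the boundary sum yields
\begin{equation*}
\Eh(w;\ulvXh) = \sum_{T\in\Th}\Big[\brac[T]{\Delta w, \pXT{k+1}\ulvXT - \vT} - \brac[\bdryT]{\nabla w\cdot\norT, \pXT{k+1}\ulvXT - \vXFT}\Big] - \sXh(\IXhk w,\ulvXh).
\end{equation*}

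Next I would establish a discrete IBP identity valid for any $z\in\POLYX{k+1}(T)$: combining the reconstruction definition \eqref{eq:reconstruction} (with test function $z$) and the formula \eqref{eq:ibp} (with $v=\pXT{k+1}\ulvXT$ and $w=z$) produces
\begin{equation*}
\brac[T]{\Delta z, \pXT{k+1}\ulvXT - \vT} - \brac[\bdryT]{\nabla z\cdot\norT, \pXT{k+1}\ulvXT - \vXFT} = 0.
\end{equation*}
Applying this with $z=\piTzr{k+1}w_r + \psi\in\POLYX{k+1}(T)$ (where $\piTzr{k+1}$ denotes the $L^2$-projector onto $\POLY{k+1}(T)$) and subtracting from the previous expression, the $\psi$ contributions in the volume and boundary integrals cancel exactly since $w-z=w_r-\piTzr{k+1}w_r$. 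The $\psi$ contribution to the stabilisation also vanishes: \eqref{eq:psi.invariance} gives $\sXT(\IXTk\psi,\IXTk\psi)=0$, and Cauchy--Schwarz for the positive semi-definite form $\sXT$ then yields $\sXT(\IXTk\psi,\ulvXT)=0$ for arbitrary $\ulvXT$. Writing $\eta_T\defeq w_r-\piTzr{k+1}w_r$, the consistency error reduces to
\begin{equation*}
\Eh(w;\ulvXh) = \sum_{T\in\Th}\Big[\brac[T]{\Delta\eta_T, \pXT{k+1}\ulvXT - \vT} - \brac[\bdryT]{\nabla\eta_T\cdot\norT, \pXT{k+1}\ulvXT - \vXFT}\Big] - \sXh(\IXhk w_r,\ulvXh).
\end{equation*}

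Finally I would apply Cauchy--Schwarz element-wise: the factors $\|\pXT{k+1}\ulvXT-\vT\|_T$ and $\|\pXT{k+1}\ulvXT-\vXFT\|_{\bdryT}$ are controlled by the coercivity bounds \eqref{eq:stab.vol.coercivity}--\eqref{eq:stab.bdry.coercivity} with respective weights $\hT$ and $\hT^{1/2}$; the regularity factors $\|\Delta\eta_T\|_T\lesssim\hT^k|w_r|_{H^{k+2}(T)}$ and (via the continuous trace \eqref{eq:trace.continuous}) $\|\nabla\eta_T\cdot\norT\|_{\bdryT}\lesssim\hT^{k+1/2}|w_r|_{H^{k+2}(T)}$ follow from standard approximation of the $L^2$-projector on $\POLY{k+1}(T)$; the stabilisation term is bounded by Cauchy--Schwarz in $\sXh$ combined with \eqref{eq:stab.consistency} applied with trivial enrichment. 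Summing via discrete Cauchy--Schwarz over $T\in\Th$ gives \eqref{eq:error.consistency}. The main obstacle is the absence of discrete trace and inverse inequalities on the enriched spaces, which prevents any direct bound of $\|\vT\|_T$ or $\|\vXFT\|_{\bdryT}$ by $\energynorm{\ulvXh}$. This is circumvented precisely by the split volume/boundary coercivity assumption, and by choosing the polynomial part of the test $z$ to lie in the standard space $\POLY{k+1}(T)$, which routes the analysis onto the regular part $w_r$ where classical $L^2$-projector estimates apply.
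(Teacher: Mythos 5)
Your argument is correct and lands on the same final structure as the paper's proof --- the consistency error is reduced to volume and boundary pairings of the polynomial approximation error of $w_r$ against $\vXT-\pXT{k+1}\ulvXT$ and $\vXFT-\pXT{k+1}\ulvXT$, plus the stabilisation, and these are bounded identically via the split coercivity \eqref{eq:stab.vol.coercivity}--\eqref{eq:stab.bdry.coercivity}, the continuous trace inequality \eqref{eq:trace.continuous} and projector approximation --- but the intermediate route is organised differently. The paper starts from the reconstruction formula for $\piXTe{k+1}w$, eliminates $\psi$ via the invariance \eqref{eq:psi.invariance}, and then performs a chain of add-and-subtract steps \eqref{proof.error.consistency.1}--\eqref{proof.error.consistency.5} built around the \emph{elliptic} projector $\piTe{k+1}$, ultimately exploiting the orthogonality $\brac[T]{\nabla(w_r-\piXTe{k+1}w_r),\nabla\pXT{k+1}\ulvXT}=0$. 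You instead integrate by parts onto $w$ directly, and isolate a single orthogonality identity --- that $\brac[T]{\Delta z,\pXT{k+1}\ulvXT-\vXT}-\brac[\bdryT]{\nabla z\cdot\norT,\pXT{k+1}\ulvXT-\vXFT}=0$ for \emph{every} $z\in\POLYX{k+1}(T)$, obtained by subtracting \eqref{eq:ibp} from \eqref{eq:reconstruction} --- which subsumes the paper's chain in one step; choosing $z=\piTzr{k+1}w_r+\psi$ cancels the singular part by construction and routes the estimates through the $L^2$-projector rather than the elliptic one (both have the required $H^1$/$H^2$-seminorm approximation properties under Assumption \ref{assum:star.shaped}). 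Your version is shorter and makes the mechanism more transparent (any element of the enriched space may be subtracted for free); the paper's version keeps the defining orthogonality of the extended elliptic projector explicit, in closer analogy with the standard HHO consistency proof. Both are complete and rely on the same external ingredients, so this is a legitimate, slightly cleaner alternative derivation of the same bound.
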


\begin{proof}
	Consider
	\begin{align}
		-\brac[\Omega]{\Delta w, \vXh} \eq \sum_{T\in\Th}-\brac[T]{\Delta w, \vXT} \nl
		\eq \sum_{T\in\Th}-\brac[T]{\Delta w, \vXT} + \brac[\bdryT]{\nabla w\cdot\norT, \vXFT}, \nn
	\end{align}
	where we justify introducing the term
	\begin{align}
		\sum_{T\in\Th}\brac[\bdryT]{\nabla w\cdot\norT, \vXFT} \eq \sum_{T\in\Th}\sum_{F\in\Fh[T]} \brac[F]{\nabla w\cdot\norTF, \vXF} \nl \eq \sum_{F\in\Fh}\sum_{T\in\Th[F]} \brac[F]{\nabla w\cdot\norTF, \vXF} = 0 \nn
	\end{align}
	due to equation \eqref{eq:gradient.regularity} and the homogeneous condition \eqref{def:global.space} on the discrete space.
	Due to the commutation property \eqref{eq:commutation}, the discrete form is given by
	\begin{align}
		\aXh(\IXhk w,\ulvXh) \eq \sum_{T\in\Th}\Big[\brac[T]{\nabla \piXTe{k+1} w, \nabla \pXT{k+1}  \ulvXT}\Big] + \sXh(\IXhk w,\ulvXh) \nl
		\eq \sum_{T\in\Th}\Big[-\brac[T]{\Delta \piXTe{k+1} w, \vXT} + \brac[\bdryT]{\nabla \piXTe{k+1} w\cdot\norT, \vXFT}\Big] + \sXh(\IXhk w,\ulvXh). \nn
	\end{align}
	Therefore,
	\begin{align}\label{proof.error.consistency.exact}
		\Eh (w;\ulvXh) \plus \sXh(\IXhk w,\ulvXh) \nl \eq \sum_{T\in\Th}\Big[-\brac[T]{\Delta(w- \piXTe{k+1} w), \vXT} + \brac[\bdryT]{\nabla(w- \piXTe{k+1} w)\cdot\norT, \vXFT}\Big] \nl
		\eq \sum_{T\in\Th}\Big[-\brac[T]{\Delta(w_r- \piXTe{k+1} w_r), \vXT} + \brac[\bdryT]{\nabla(w_r- \piXTe{k+1} w_r)\cdot\norT, \vXFT}\Big], 
	\end{align}
	where \(\psi\) has been eliminated via the invariance \eqref{eq:psi.invariance} of \(\piXTe{k+1}\). Let us denote by \(\piTe{k+1}  \) the elliptic projector \cite{di-pietro.droniou:2020:hybrid} on the polynomial space \(\POLY{k+1}(T)\subset \POLYX{k+1}(T)\).
	On each element \(T\in\Th\) we introduce the elliptic projector as follows,
	\begin{align}\label{proof.error.consistency.1}
		- \brac[T]{\Delta({}&w_r - \piXTe{k+1} w_r), \vXT} + \brac[\bdryT]{\nabla(w_r- \piXTe{k+1} w_r)\cdot\norT, \vXFT} \nl
		\eq-\brac[T]{\Delta(w_r- \piTe{k+1}  w_r), \vXT} + \brac[\bdryT]{\nabla(w_r- \piTe{k+1}  w_r)\cdot\norT, \vXFT} \nl \minus \brac[T]{\Delta(\piTe{k+1}  w_r- \piXTe{k+1} w_r), \vXT} + \brac[\bdryT]{\nabla(\piTe{k+1}  w_r- \piXTe{k+1} w_r)\cdot\norT, \vXFT}. 
	\end{align}
	As \(\piTe{k+1}  w_r- \piXTe{k+1} w_r\in\POLYX{k+1}(T)\) we may invoke the definition \eqref{eq:reconstruction} of the potential reconstruction to write
	\begin{align}\label{proof.error.consistency.2}
		- \brac[T]{\Delta(\piTe{k+1}  w_r- \piXTe{k+1} w_r), \vXT} \plus \brac[\bdryT]{\nabla(\piTe{k+1}  w_r- \piXTe{k+1} w_r)\cdot\norT, \vXFT} \nl
		\eq \brac[T]{\nabla(\piTe{k+1}  w_r- \piXTe{k+1} w_r), \nabla \pXT{k+1} \ulvXT}. 
	\end{align}
	Consider also
	\begin{align}\label{proof.error.consistency.3}
		-\brac[T]{\Delta(w_r \,-\, & \piTe{k+1}  w_r), \vXT} + \brac[\bdryT]{\nabla(w_r- \piTe{k+1}  w_r)\cdot\norT, \vXFT} \nl
		\eq -\brac[T]{\Delta(w_r - \piTe{k+1}  w_r), \vXT - \pXT{k+1}\ulvXT} - \brac[T]{\Delta(w_r - \piTe{k+1}  w_r), \pXT{k+1}\ulvXT} \nl \plus \brac[\bdryT]{\nabla(w_r- \piTe{k+1}  w_r)\cdot\norT, \vXFT} \nl
		\eq -\brac[T]{\Delta(w_r - \piTe{k+1}  w_r), \vXT - \pXT{k+1}\ulvXT} + \brac[T]{\nabla(w_r - \piTe{k+1}  w_r), \nabla \pXT{k+1}\ulvXT} \nl \plus \brac[\bdryT]{\nabla(w_r- \piTe{k+1}  w_r)\cdot\norT, \vXFT - \pXT{k+1}\ulvXT}, 
	\end{align}
	where we have introduced the term \(\pXT{k+1}\ulvXT\) and integrated by parts. 
	Combining \eqref{proof.error.consistency.1}, \eqref{proof.error.consistency.2} and \eqref{proof.error.consistency.3} yields
	\begin{align}\label{proof.error.consistency.4}
		- \brac[T]{\Delta(w_r \,-\, & \piXTe{k+1} w_r), \vXT} + \brac[\bdryT]{\nabla(w_r- \piXTe{k+1} w_r)\cdot\norT, \vXFT} \nl
		\eq -\brac[T]{\Delta(w_r- \piTe{k+1}  w_r), \vXT-\pXT{k+1} \ulvXT} + \brac[T]{\nabla(w_r- \piXTe{k+1} w_r), \nabla \pXT{k+1} \ulvXT} \nl \plus \brac[\bdryT]{\nabla(w_r- \piTe{k+1}  w_r)\cdot\norT, \vXFT-\pXT{k+1} \ulvXT}. 
	\end{align}
	By the definition \eqref{eq:elliptic.projector} of the extended elliptic projector we have that
	\begin{equation}\label{proof.error.consistency.5}
		\brac[T]{\nabla(w_r- \piXTe{k+1} w_r), \nabla \pXT{k+1} \ulvXT} = 0. 
	\end{equation}
	Therefore, combining \eqref{proof.error.consistency.exact}, \eqref{proof.error.consistency.4} and \eqref{proof.error.consistency.5} yields
	\begin{align}
		\Eh (w;\ulvXh) \eq -\sXh(\IXhk w,\ulvXh) + \sum_{T\in\Th}\Big[-\brac[T]{\Delta(w_r- \piTe{k+1}  w_r), \vXT-\pXT{k+1} \ulvXT} \nl \plus \brac[\bdryT]{\nabla(w_r- \piTe{k+1}  w_r)\cdot\norT, \vXFT-\pXT{k+1} \ulvXT}\Big], \nn
	\end{align}
	and thus
	\begin{align}\label{proof.error.consistency.6}
		\big|\Eh (w;\ulvXh)\big| \lea \sum_{T\in\Th}\big|\sXT(\IXTk w,\ulvXT)\big| + \sum_{T\in\Th}\big|\brac[T]{\Delta(w_r- \piTe{k+1}  w_r), \vXT-\pXT{k+1} \ulvXT}\big| \nl \plus \sum_{T\in\Th}\big|\brac[\bdryT]{\nabla(w_r- \piTe{k+1}  w_r)\cdot\norT, \vXFT-\pXT{k+1} \ulvXT}\big|.
	\end{align}
	By a Cauchy--Schwarz inequality, the coercivity condition \eqref{eq:stab.vol.coercivity}, and the approximation properties of the elliptic projector \cite[Theorem 1.48]{di-pietro.droniou:2020:hybrid},
	\begin{align}\label{proof.error.consistency.7}
		\big|\brac[T]{\Delta(w_r- \piTe{k+1}  w_r), \vXT-\pXT{k+1} \ulvXT}\big| \lea \norm[T]{\Delta(w_r- \piTe{k+1}  w_r)}\norm[T]{\vXT-\pXT{k+1} \ulvXT}\nl
		\les \seminorm[H^2(T)]{w_r- \piTe{k+1}  w_r}\hT\aXT(\ulvXT,\ulvXT)^\frac12 \nl
		\les \hT^{k+1}\seminorm[H^{k+2}(T)]{w_r}\aXT(\ulvXT,\ulvXT)^\frac12.
	\end{align}
	Similarly, by a Cauchy--Schwarz inequality, the continuous trace inequality \eqref{eq:trace.continuous}, and \eqref{eq:stab.bdry.coercivity},
	\begin{align}\label{proof.error.consistency.8}
		\big|\brac[\bdryT]{\nabla(w_r \minus \piTe{k+1}  w_r)\cdot\norT, \vXFT-\pXT{k+1} \ulvXT}\big| \nl \lea \norm[\bdryT]{\nabla(w_r- \piTe{k+1}  w_r)}\norm[\bdryT]{\vXFT-\pXT{k+1} \ulvXT} \nl
		\les \Brac{\hT^{-\frac12}\seminorm[H^1(T)]{w_r- \piTe{k+1}  w_r} + \hT^\frac12\seminorm[H^2(T)]{w_r- \piTe{k+1}  w_r}} \hT^\frac12\aXT(\ulvXT,\ulvXT)^\frac12\nl
		\les \hT^{k+1}\seminorm[H^{k+2}(T)]{w_r}\aXT(\ulvXT,\ulvXT)^\frac12,
	\end{align}
	where we have again invoked the approximation properties of the elliptic projector. The stabilisation term is bounded using a Cauchy--Schwarz inequality and the consistency condition \eqref{eq:stab.consistency},
	\begin{align}\label{proof.error.consistency.9} 
		\big|\sXT(\IXTk w,\ulvXT)\big| \lea \sXT(\IXTk w,\IXTk w)^\frac12\sXT(\ulvXT,\ulvXT)^\frac12 \nl \les \hT^{k+1}\seminorm[H^{k+2}(T)]{w_r}\aXT(\ulvXT,\ulvXT)^\frac12.
	\end{align}
	Substituting \eqref{proof.error.consistency.7}, \eqref{proof.error.consistency.8}, and \eqref{proof.error.consistency.9} into \eqref{proof.error.consistency.6} yields
	\begin{equation}
		\big|\Eh (w;\ulvXh)\big| \lesssim \sum_{T\in\Th}\hT^{k+1}\seminorm[H^{k+2}(T)]{w_r}\aXT(\ulvXT,\ulvXT)^\frac12. \nn
	\end{equation}
	The proof follows from a discrete Cauchy--Schwarz inequality and noting that \(\hT \le h\) for all \(T\in\Th\).
\end{proof}

The proof of the consistency error given above is significantly more detailed than that of regular HHO given in \cite[Section 2.2]{di-pietro.droniou:2020:hybrid}. The reason for this is partly due the limited regularity of element unknowns, \(\vXT\in L^2(T)\). However, even if we were to assume \(\HONE\)-regularity to arrive at the equation
\begin{multline*}
	\Eh (w;\ulvXh) + \sXh(\IXhk w,\ulvXh) = \sum_{T\in\Th}\Big[\brac[\T]{\nabla(w_r - \piXTe{k+1} w_r), \nabla\vXT} \\ 
	+ \brac[\bdryT]{\nabla(w_r - \piXTe{k+1} w_r)\cdot\norT, \vXFT - \vXT}\Big],
\end{multline*}
this is still not useful. The first term does not equate to zero as $\vXT \notin \POLYX{k+1}(T)$ in general. Moreover, there is no guarantee that \(\norm[\bdryT]{\nabla(w_r - \piXTe{k+1} w_r)\cdot\norT}\) will scale appropriately. Thus, the extra and lengthy details in the proof of Theorem \ref{thm:consistency.error} appear necessary.

\begin{theorem}[Energy error]\label{thm:energy.error}
	Let \(u=u_r+\tilde{u} \in \mathcal{V}^{k+2}(\Omega)\) be the exact solution to the continuous problem \eqref{eq:weak.form} where \(u_r\in H^{k+2}(\Th)\) and \(\tilde{u}\in W(\Th)\). Let $\uluXh\in\UXhkzr$ be the solution to the discrete problem \eqref{eq:discrete.problem}. The following energy error estimates hold:
	\begin{equation} \label{eq:energy.error}
		\energynorm{\uluXh - \IXhk u} + \seminorm[H^1(\Th)]{\pXh{k+1} \uluXh - u} \lesssim h^{k+1}\seminorm[H^{k+2}(\Th)]{u_r}.
	\end{equation}
\end{theorem}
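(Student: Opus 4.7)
My plan is to follow the classical third-Strang (consistency + stability) strategy and split the proof into two parts: first bound the discrete energy error by testing the error equation against itself and invoking the consistency estimate of Theorem \ref{thm:consistency.error}, then deduce the continuous $H^1$-seminorm bound via a triangle inequality centred on $\piXhe{k+1} u$.

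For the discrete bound, set $\ulvXh \defeq \uluXh - \IXhk u$. Using $f = -\Delta u$ in $L^2(\Omega)$ together with the definition of the discrete problem \eqref{eq:discrete.problem}, I would write
\begin{equation*}
\aXh(\uluXh - \IXhk u, \ulvXh) = \brac[\Omega]{f,\vXh} - \aXh(\IXhk u, \ulvXh) = -\brac[\Omega]{\Delta u, \vXh} - \aXh(\IXhk u, \ulvXh) = \Eh(u;\ulvXh).
\end{equation*}
Since the left-hand side equals $\energynorm{\ulvXh}^2$, Theorem \ref{thm:consistency.error} applied with $w=u$ immediately yields $\energynorm{\ulvXh} \lesssim h^{k+1}\seminorm[H^{k+2}(\Th)]{u_r}$, which is the first contribution to \eqref{eq:energy.error}.

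For the continuous bound I would insert $\pXh{k+1}\IXhk u = \piXhe{k+1} u$ (by the commutation property \eqref{eq:commutation}) via the triangle inequality
\begin{equation*}
\seminorm[H^1(\Th)]{\pXh{k+1}\uluXh - u} \le \seminorm[H^1(\Th)]{\pXh{k+1}(\uluXh - \IXhk u)} + \seminorm[H^1(\Th)]{\piXhe{k+1} u - u}.
\end{equation*}
The first term is controlled element-wise by $\aXT(\uluXT-\IXTk u,\uluXT-\IXTk u)^{1/2}$ since the stabilisation is positive semi-definite, so it is bounded by $\energynorm{\uluXh - \IXhk u}$, which was already estimated. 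For the second term, linearity of $\piXTe{k+1}$ and the invariance $\piXTe{k+1}\tilde u = \tilde u$ from \eqref{eq:psi.invariance} give $\piXTe{k+1} u - u = \piXTe{k+1} u_r - u_r$ on each $T$. Since $\POLY{k+1}(T)\subset\POLYX{k+1}(T)$ and $\piXTe{k+1}$ is an orthogonal projector in the $H^1$-seminorm onto $\POLYX{k+1}(T)$, it is a best-approximation in that seminorm, so
\begin{equation*}
\seminorm[H^1(T)]{u_r - \piXTe{k+1} u_r} \le \seminorm[H^1(T)]{u_r - \piTe{k+1} u_r} \lesssim \hT^{k+1}\seminorm[H^{k+2}(T)]{u_r},
\end{equation*}
the last bound being a classical approximation property of the standard polynomial elliptic projector \cite[Theorem 1.48]{di-pietro.droniou:2020:hybrid}. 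Summing over $T\in\Th$ and combining with the discrete-error bound finishes the proof.

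The main subtlety, and the step I would double-check carefully, is the approximation property of the \emph{extended} elliptic projector: we have no direct Bramble--Hilbert-style estimate for $\piXTe{k+1}$ (the space $\POLYX{k+1}(T)$ is non-polynomial and the standard inverse/trace machinery fails), so I rely on the fact that $\piXTe{k+1}$ is the $H^1$-semi-orthogonal projection onto a superspace of $\POLY{k+1}(T)$ to transfer the estimate from the polynomial projector. The decomposition $u = u_r + \tilde u$ and the exactness $\piXTe{k+1}\tilde u = \tilde u$ are what make this reduction possible; everything else is a routine assembly of the consistency estimate and the coercivity of $\aXh$ in $\energynorm{\cdot}$.
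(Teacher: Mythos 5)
Your proof is correct and follows essentially the same route as the paper: the discrete bound is the Third Strang Lemma argument (which you simply unfold by testing the error equation with $\uluXh-\IXhk u$ rather than citing \cite{di-pietro.droniou:2018:third}) combined with Theorem \ref{thm:consistency.error}, and the continuous bound uses the same triangle inequality, the commutation property \eqref{eq:commutation}, the invariance $\piXTe{k+1}\tilde u=\tilde u$, and the transfer from $\piXTe{k+1}$ to $\piTe{k+1}$ via $H^1$-seminorm best approximation on the superspace $\POLYX{k+1}(T)\supset\POLY{k+1}(T)$. The subtlety you flag at the end is exactly the point the paper relies on as well, so no gap remains.
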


\begin{proof}
	By the coercivity conditions \eqref{eq:stab.bdry.coercivity} and \eqref{eq:stab.vol.coercivity}, as well as the homogeneous conditions on the discrete space, it is clear that $\energynorm{\cdot}$ describes a norm on $\UXhkzr$. As such, we infer from the Third Strang Lemma \cite{di-pietro.droniou:2018:third} that
	\begin{equation}
		\energynorm{\uluXh - \IXhk u} \le \sup_{\ulvXh\ne0}\frac{|\mathcal{E}_h( u;\ulvXh)|}{\energynorm{\ulvXh}}. \nn
	\end{equation}
	Combining with the consistency error \eqref{eq:error.consistency} yields the estimate
	\begin{equation}\label{eq:energy.error.discrete}
		\energynorm{\uluXh - \IXhk u} \lesssim h^{k+1}\seminorm[H^{k+2}(\Th)]{u_r}.
	\end{equation}
	Consider on each element $T\in\Th$ the triangle inequality,
	\begin{align}
		\seminorm[H^1(T)]{\pXT{k+1}  \uluXT - u} \lea
		\seminorm[H^1(T)]{\pXT{k+1}  \uluXT - \piXTe{k+1}  u} + \seminorm[H^1(T)]{\piXTe{k+1}  u - u} \nl
		\eq \seminorm[H^1(T)]{\pXT{k+1}  (\uluXT - \IXTk u)} + \seminorm[H^1(T)]{\piXTe{k+1}  u_r - u_r}\nl 
		\lea \aXT(\uluXT - \IXTk u, \uluXT - \IXTk u)^\frac12 + \seminorm[H^1(T)]{\piTe{k+1}  u_r - u_r},\nn
	\end{align}
	where \(\piXTe{k+1}\) is replaced by \(\piTe{k+1}\) due to orthogonal projectors minimising their respective norms and \(\piTe{k+1}  u_r\in\POLYX{k+1}(T)\).
	Squaring, summing over all \(T\in\Th\) and invoking the approximation properties of the elliptic projector yields
	\begin{equation}
		\seminorm[H^1(\Th)]{\pXh{k+1} \uluXh - u}^2 \lesssim \energynorm{\uluXh - \IXhk u}^2 + \big[h^{k+1}\seminorm[H^{k+2}(\Th)]{u_r}\big]^2. \nn
	\end{equation}
	The proof is complete by applying the estimate \eqref{eq:energy.error.discrete}.
\end{proof}	

\section{Stabilisation}\label{sec:stab}

In this section, we give an example of a stabilisation term satisfying the coercivity and consistency properties of Assumption \ref{assum:stability}. We first give below an extension of \cite[Lemma 2.11]{di-pietro.droniou:2020:hybrid} to the extended discrete spaces considered here. The difference operators \(\deltaXT{k}:\UXTk\to\POLYD{k}(T)\) and \(\deltaXFT{k}:\UXTk\to\POLYN{k}(\Fh[T])\) are defined via
\begin{equation*}
	\deltaXT{k} \ulvXT \defeq \vXT-\piXTzr{k} \pXT{k+1} \ulvXT\qquad\textrm{and}\qquad \deltaXFT{k} \ulvXT \defeq \vXFT-\piXFTzr{k} \pXT{k+1} \ulvXT.
\end{equation*}

\begin{lemma}[Dependency of \(\sXT\)]
	For \(\sXT\) to satisfy the consistency condition \eqref{eq:stab.consistency} it is necessary (but not sufficient) that \(\sXT\) depends on its arguments only through $\deltaXT{k}$ and $\deltaXFT{k}$.
\end{lemma}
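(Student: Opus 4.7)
The plan is to mimic the proof of \cite[Lemma 2.11]{di-pietro.droniou:2020:hybrid}, adapting it to the extended setting. The key observation is that the consistency assumption \eqref{eq:stab.consistency} forces $\sXT$ to vanish on interpolates of elements of $\POLYX{k+1}(T)$, which combined with positive semi-definiteness allows one to factor $\sXT$ through the difference operators.

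First I would show that $\sXT(\IXTk w, \IXTk w) = 0$ for every $w \in \POLYX{k+1}(T)$. Given such a $w$, we can write $w = w_r + \psi$ with $w_r \in \POLY{k+1}(T) \subset H^{k+2}(T)$ and $\psi \in W(T)$. Since $w_r$ is a polynomial of degree $\le k+1$, its $H^{k+2}(T)$-seminorm vanishes, and \eqref{eq:stab.consistency} then yields
\begin{equation*}
    0 \le \sXT(\IXTk w, \IXTk w) \lesssim \hT^{2(k+1)}\seminorm[H^{k+2}(T)]{w_r}^2 = 0.
\end{equation*}
Next, since $\sXT$ is a symmetric positive semi-definite bilinear form, the Cauchy--Schwarz inequality for semi-inner products gives
\begin{equation*}
    |\sXT(\IXTk w, \ulvXT)| \le \sXT(\IXTk w, \IXTk w)^{\frac12}\sXT(\ulvXT, \ulvXT)^{\frac12} = 0
\end{equation*}
for every $\ulvXT \in \UXTk$ and every $w \in \POLYX{k+1}(T)$.

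The crucial step is then to decompose an arbitrary $\ulvXT \in \UXTk$ through the reconstruction. Since $\pXT{k+1}\ulvXT \in \POLYX{k+1}(T)$, the definitions of the difference operators give
\begin{equation*}
    \ulvXT = \IXTk \big(\pXT{k+1}\ulvXT\big) + \big(\deltaXT{k}\ulvXT, \deltaXFT{k}\ulvXT\big),
\end{equation*}
because $\IXTk(\pXT{k+1}\ulvXT) = \big(\piXTzr{k}\pXT{k+1}\ulvXT, \piXFTzr{k}\pXT{k+1}\ulvXT\big)$ by definition of $\IXTk$. Combining the previous vanishing property with the bilinearity and symmetry of $\sXT$, we obtain, for all $\uluXT, \ulvXT \in \UXTk$,
\begin{equation*}
    \sXT(\uluXT, \ulvXT) = \sXT\!\big((\deltaXT{k}\uluXT, \deltaXFT{k}\uluXT), (\deltaXT{k}\ulvXT, \deltaXFT{k}\ulvXT)\big),
\end{equation*}
which expresses $\sXT$ purely through the difference operators, as claimed.

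The main (minor) obstacle is simply making sure the polynomial/singular decomposition $w = w_r + \psi$ is legitimate for $w \in \POLYX{k+1}(T)$: by the definition \eqref{def:extended.space} this decomposition is available (though generally not unique), and any choice suffices since only $|w_r|_{H^{k+2}(T)}=0$ matters. The non-sufficiency claim does not need to be proved — it is recorded as a caveat, and a counterexample could be produced by multiplying any admissible $\sXT$ by a factor depending on $\pXT{k+1}\ulvXT$ in a way that breaks \eqref{eq:stab.consistency} while preserving the dependence only on the differences.
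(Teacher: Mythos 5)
Your proof is correct and follows essentially the same route as the paper: the consistency condition forces $\sXT(\IXTk w,\IXTk w)=0$ for all $w\in\POLYX{k+1}(T)$ (since the regular part is a polynomial of degree at most $k+1$), Cauchy--Schwarz for semi-inner products kills the cross terms, and subtracting $\IXTk\pXT{k+1}\ulvXT$ identifies the remainder with $(\deltaXT{k}\ulvXT,\deltaXFT{k}\ulvXT)$. The paper likewise leaves the non-sufficiency claim unproved, so nothing is missing.
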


\begin{proof}
	Consider a symmetric, positive semi-definite bilinear form \(\sXT\) that satisfies condition \eqref{eq:stab.consistency}. Thus, for all \(w=w_r+\psi\in \POLYX{k+1}(T)\) with \(w_r\in\POLY{k+1}(T)\) and \(\psi\in W(T)\),
	\begin{equation}
		\sXT(\IXTk w, \IXTk w) \lesssim \locerrorRHS{w_r} =  0. \nn
	\end{equation}
	Therefore, by a Cauchy--Schwarz inequality,
	\begin{equation}
		\sXT(\IXTk w, \ulvXT) = 0 \nn
	\end{equation}
	for all \(\ulvXT\in\UXTk\). Thus,
	\begin{align}
	\sXT(\uluXT, \ulvXT) \eq \sXT(\uluXT, \ulvXT - \IXTk\pXT{k+1} \ulvXT) \nl 
	\eq \sXT(\uluXT - \IXTk\pXT{k+1} \uluXT, \ulvXT - \IXTk\pXT{k+1} \ulvXT). \nn
	\end{align}
	The proof is complete by noting that
	\begin{equation}
	\ulvXT-\IXTk\pXT{k+1} \ulvXT = \brac{\deltaXT{k} \ulvXT, \deltaXFT{k} \ulvXT}. \nn
	\end{equation}
\end{proof}

A stabilisation term satisfying Assumption \ref{assum:stability} is obtained setting
\begin{equation}
	\label{def:vol.stab}
	\sXT(\uluXT, \ulvXT) \defeq \hT^{-2}\brac[T]{\deltaXT{k} \uluXT, \deltaXT{k} \ulvXT} + \hT^{-1}\brac[\bdryT]{\deltaXFT{k} \uluXT, \deltaXFT{k} \ulvXT}.
\end{equation}

\begin{lemma}[Coercivity]
	The stabilisation term defined by \eqref{def:vol.stab} satisfies the coercivity conditions \eqref{eq:stab.vol.coercivity} and \eqref{eq:stab.bdry.coercivity}.
\end{lemma}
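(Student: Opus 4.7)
The plan is to bound both quantities by adding and subtracting the appropriate $L^2$-orthogonal projection of the reconstruction $q \defeq \pXT{k+1}\ulvXT$, which isolates the jump terms $\deltaXT{k}\ulvXT$ and $\deltaXFT{k}\ulvXT$ that make up $\sXT$; the remainder will be controlled by a Poincaré--Wirtinger inequality and the continuous trace inequality \eqref{eq:trace.continuous} applied to $q$. Crucially, $q \in \POLYX{k+1}(T) \subset H^1(T)$ by Assumption \ref{item:A1}, so these continuous estimates are available for $q$ even though the element unknown $\vXT \in \POLYD{k}(T)$ itself need not be $H^1$-regular.

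For the first bound \eqref{eq:stab.vol.coercivity}, I would write
\begin{equation*}
	\vXT - q = (\vXT - \piXTzr{k} q) - (q - \piXTzr{k} q) = \deltaXT{k}\ulvXT - (q - \piXTzr{k} q).
\end{equation*}
Because $\POLY{0}(T) \subset \POLYD{k}(T)$, the projector $\piXTzr{k}$ preserves the mean $\bar{q} \defeq |T|^{-1}\brac[T]{q,1}$, so that $q - \piXTzr{k} q = (q - \bar{q}) - \piXTzr{k}(q - \bar{q})$. The $L^2$-contractivity of $\piXTzr{k}$ together with a Poincaré--Wirtinger estimate on the star-shaped element (cf.\ Assumption \ref{assum:star.shaped}) then gives
\begin{equation*}
	\norm[T]{q - \piXTzr{k} q} \le 2\,\norm[T]{q - \bar{q}} \lesssim \hT\,\seminorm[H^1(T)]{q} = \hT\,\norm[T]{\nabla \pXT{k+1}\ulvXT}.
\end{equation*}
Squaring the triangle inequality, multiplying by $\hT^{-2}$, and observing that both $\hT^{-2}\norm[T]{\deltaXT{k}\ulvXT}^2$ and $\norm[T]{\nabla \pXT{k+1}\ulvXT}^2$ are already bounded by $\aXT(\ulvXT,\ulvXT)$ yields \eqref{eq:stab.vol.coercivity}.

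For \eqref{eq:stab.bdry.coercivity} the same decomposition on the boundary gives $\vXFT - q = \deltaXFT{k}\ulvXT - (q - \piXFTzr{k} q)$. Since the constant function lies in $\POLYN{k}(\Fh[T])$, the same mean-value argument produces $\norm[\bdryT]{q - \piXFTzr{k} q} \le 2\,\norm[\bdryT]{q - \bar{q}}$. Applying the continuous trace inequality \eqref{eq:trace.continuous} to $q - \bar{q}$ and then Poincaré delivers $\norm[\bdryT]{q - \bar{q}} \lesssim \hT^{1/2}\seminorm[H^1(T)]{q}$, so the triangle inequality and multiplication by $\hT^{-1}$ close the proof of \eqref{eq:stab.bdry.coercivity}.

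The real difficulty is not computational but structural: because $\vXT$ lives only in $L^2(T)$ (the space $\POLYD{k}(T)$ contains $\Delta W(T)$, which need not be $H^1$), no Poincaré estimate is available directly on $\vXT - q$, and for the same reason one cannot invoke a discrete trace inequality on $\POLYD{k}(T)$ or $\POLYN{k}(\Fh[T])$. Routing every $H^1$-based estimate through the reconstruction $q$ — whose regularity is guaranteed by Assumption \ref{item:A1} — is exactly what makes the argument close, and is the key departure from the standard polynomial HHO coercivity proof.
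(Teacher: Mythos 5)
Your proof is correct and follows essentially the same route as the paper's: split $\vXT-\pXT{k+1}\ulvXT$ (resp.\ $\vXFT-\pXT{k+1}\ulvXT$) through the enriched $L^2$-projection to isolate $\deltaXT{k}\ulvXT$ (resp.\ $\deltaXFT{k}\ulvXT$), exploit the minimisation property of the orthogonal projector over the enriched space to compare against a polynomial approximation of $q=\pXT{k+1}\ulvXT$, and run the Poincar\'e and continuous trace estimates on $q\in H^1(T)$. The only cosmetic difference is that you compare against the mean value $\bar q$ where the paper compares against $\piTzr{k}q$ and invokes the approximation properties of the polynomial $L^2$-projector; the substance is identical.
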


\begin{proof}
	Consider by a triangle inequality, and the inclusion \(\POLY{k}(T)\subset\POLY{k}_\Delta(T)\) along with the minimisation of \(\piXTzr{k}\) on \(\POLYD{k}(T)\),
	\begin{align}
		\hT^{-2}\norm[T]{\vXT-\pXT{k+1} \ulvXT}^2 \les \hT^{-2}\norm[T]{\vXT-\piXTzr{k} \pXT{k+1} \ulvXT}^2 + \hT^{-2}\norm[T]{\piXTzr{k} \pXT{k+1} \ulvXT-\pXT{k+1} \ulvXT}^2 \nl
		\lea \sXT(\ulvXT, \ulvXT) + \hT^{-2}\norm[T]{\piTzr{k}  \pXT{k+1} \ulvXT-\pXT{k+1} \ulvXT}^2 \nl
		\les \sXT(\ulvXT, \ulvXT) + \seminorm[H^1(T)]{\pXT{k+1} \ulvXT}^2 = \aXT(\ulvXT, \ulvXT), \nn
	\end{align}
	where the final inequality follows from the approximation properties of the \(L^2\)-orthogonal projector on \(\POLY{k}(T)\) \cite[Theorem 1.4.5]{di-pietro.droniou:2020:hybrid}.
	Similarly, by a triangle inequality and noting that \(\POLY{k}(T)|_{\bdryT} \subset \POLYN{k}(\Fh[T])\) (so to replace \(\piXFTzr{k}\) with \(\piTzr{k}\)),
	\begin{align}
		\hT^{-1}\norm[\bdryT]{&\vXFT-\pXT{k+1} \ulvXT}^2 \nl \les \hT^{-1}\norm[\bdryT]{\vXFT-\piXFTzr{k} \pXT{k+1} \ulvXT}^2 + \hT^{-1}\norm[\bdryT]{\piXFTzr{k} \pXT{k+1} \ulvXT-\pXT{k+1} \ulvXT}^2 \nl
		\lea \sXT(\ulvXT, \ulvXT) + \hT^{-1}\norm[\bdryT]{\piTzr{k}  \pXT{k+1} \ulvXT-\pXT{k+1} \ulvXT}^2 \nl
		\les \sXT(\ulvXT, \ulvXT) + \hT^{-2}\norm[T]{\piTzr{k}  \pXT{k+1} \ulvXT-\pXT{k+1} \ulvXT}^2 + \seminorm[H^1(T)]{\piTzr{k}  \pXT{k+1} \ulvXT-\pXT{k+1} \ulvXT}^2 \nl
		\les \sXT(\ulvXT, \ulvXT) + \seminorm[H^1(T)]{\pXT{k+1} \ulvXT}^2 = \aXT(\ulvXT, \ulvXT), \nn
	\end{align}
	where we have used the continuous trace inequality \eqref{eq:trace.continuous} and again invoked the approximation properties of the \(L^2\)-orthogonal projector.
\end{proof}

\begin{lemma}[Consistency]
	The stabilisation term defined by \eqref{def:vol.stab} satisfies the consistency condition \eqref{eq:stab.consistency}.
\end{lemma}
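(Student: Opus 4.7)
The plan is to evaluate $\sXT(\IXTk w, \IXTk w)$ directly from definition \eqref{def:vol.stab} and show each of the two contributions scales like $[\hT^{k+1}\seminorm[H^{k+2}(T)]{w_r}]^2$. The first step is to simplify the difference operators applied to $\IXTk w$: using the commutation property \eqref{eq:commutation} and the invariance $\piXTe{k+1}\psi = \psi$ from \eqref{eq:psi.invariance}, we get
\begin{equation*}
\deltaXT{k}\IXTk w = \piXTzr{k}\bigl(w_r - \piXTe{k+1} w_r\bigr), \qquad \deltaXFT{k}\IXTk w = \piXFTzr{k}\bigl(w_r - \piXTe{k+1} w_r\bigr),
\end{equation*}
so by $L^2$-boundedness of the orthogonal projectors $\piXTzr{k}$ and $\piXFTzr{k}$ the task reduces to estimating $\norm[T]{w_r - \piXTe{k+1} w_r}$ and $\norm[\bdryT]{w_r - \piXTe{k+1} w_r}$.

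Next, since $\piTe{k+1} w_r \in \POLY{k+1}(T) \subset \POLYX{k+1}(T)$, the characterisation \eqref{eq:elliptic.projector} shows that $\piXTe{k+1} w_r$ minimises the $H^1$-seminorm over $\POLYX{k+1}(T)$, so
\begin{equation*}
\seminorm[H^1(T)]{w_r - \piXTe{k+1} w_r} \le \seminorm[H^1(T)]{w_r - \piTe{k+1} w_r} \lesssim \hT^{k+1}\seminorm[H^{k+2}(T)]{w_r}
\end{equation*}
by the standard approximation properties of the polynomial elliptic projector (\cite[Theorem 1.48]{di-pietro.droniou:2020:hybrid}). To get the $L^2$ bound on $T$, I use the closure condition \eqref{eq:elliptic.projector.closure}, which gives zero mean of $w_r - \piXTe{k+1} w_r$ on $T$; a Poincaré--Wirtinger inequality then yields $\norm[T]{w_r - \piXTe{k+1} w_r} \lesssim \hT^{k+2}\seminorm[H^{k+2}(T)]{w_r}$. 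For the boundary contribution, I apply the continuous trace inequality \eqref{eq:trace.continuous} to $w_r - \piXTe{k+1} w_r$ and combine the two bounds above to obtain $\norm[\bdryT]{w_r - \piXTe{k+1} w_r}^2 \lesssim \hT^{2k+3}\seminorm[H^{k+2}(T)]{w_r}^2$.

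Substituting into the definition \eqref{def:vol.stab},
\begin{equation*}
\sXT(\IXTk w,\IXTk w) \lesssim \hT^{-2}\cdot\hT^{2(k+2)}\seminorm[H^{k+2}(T)]{w_r}^2 + \hT^{-1}\cdot\hT^{2k+3}\seminorm[H^{k+2}(T)]{w_r}^2 \lesssim \locerrorRHS{w_r},
\end{equation*}
as required. The only nontrivial step is the $L^2$ estimate on $T$: one cannot invoke standard $L^2$ approximation of an elliptic projector on $\POLYX{k+1}(T)$ (since $W(T)$ may be arbitrarily wild, no duality/Aubin--Nitsche argument is available), so it is essential to exploit the closure \eqref{eq:elliptic.projector.closure} together with Poincaré--Wirtinger and then rely only on the $H^1$-seminorm minimisation property (which is available because $\POLY{k+1}(T)\subset\POLYX{k+1}(T)$). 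Everything else is a straightforward assembly of projector boundedness, trace inequality and polynomial elliptic-projection estimates.
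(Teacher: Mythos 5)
Your proof is correct and follows essentially the same route as the paper's: reduce to estimating $w-\piXTe{k+1}w$ via the $L^2$-boundedness of the orthogonal projectors, use the closure condition \eqref{eq:elliptic.projector.closure} with a Poincar\'e--Wirtinger inequality and the continuous trace inequality \eqref{eq:trace.continuous}, eliminate $\psi$ through the invariance \eqref{eq:psi.invariance}, and conclude by $H^1$-seminorm minimisation over $\POLYX{k+1}(T)\supset\POLY{k+1}(T)$ together with the approximation properties of the polynomial elliptic projector. The only (immaterial) difference is the order of operations: you pass to $w_r$ via the invariance before applying the Poincar\'e and trace inequalities, whereas the paper applies them to $w-\piXTe{k+1}w$ first.
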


\begin{proof}
	By the definition of \(\sXT\), the commutation property \eqref{eq:commutation}, and the boundedness of orthogonal projectors, it holds that
	\begin{align}
		\sXT(\IXTk w, \IXTk w) \eq \hT^{-2}\norm[T]{\piXTzr{k} w-\piXTzr{k} \piXTe{k+1} w}^2 + \hT^{-1}\norm[\bdryT]{\piXFTzr{k} w-\piXFTzr{k} \piXTe{k+1} w}^2 \nl			
		\lea \hT^{-2}\norm[T]{w-\piXTe{k+1} w}^2 + \hT^{-1}\norm[\bdryT]{w-\piXTe{k+1} w}^2. \nn
	\end{align}
	By the continuous trace inequality \eqref{eq:trace.continuous} and a Poincar\'{e} inequality due to the zero mean value of \(w-\piXTe{k+1} w\), we infer that
	\begin{align}
			\sXT(\IXTk w, \IXTk w) \les \hT^{-2}\norm[T]{w-\piXTe{k+1} w}^2 + \seminorm[H^1(T)]{w-\piXTe{k+1} w}^2 \nl
			\les \seminorm[H^1(T)]{w-\piXTe{k+1} w}^2. \nn
	\end{align}
	We note the inclusion \(\POLY{k+1}(T)\subset \POLYX{k+1}(T)\) and invoke the invariance and minimisation properties of orthogonal projectors to conclude that
	\begin{equation}
		\seminorm[H^1(T)]{w-\piXTe{k+1} w}^2 = \seminorm[H^1(T)]{w_r-\piXTe{k+1} w_r}^2 \le \seminorm[H^1(T)]{w_r-\piTe{k+1}  w_r}^2. \nn
	\end{equation}
	The proof then follows from the approximation properties of the elliptic projector \cite[Theorem 1.48]{di-pietro.droniou:2020:hybrid}.
\end{proof}

Designing alternate stabilisation terms proves difficult due to the limited regularity of the unknowns. However, if we assume that \(\Delta W(T) \subset \POLY{k}(T)\) (so that the element unknowns are polynomials), the stabilisation terms
\begin{align}
	\sXT^\nabla(\uluXT, \ulvXT) \deq \brac[T]{\nabla \deltaXT{k}\uluXT, \nabla \deltaXT{k}\ulvXT} + \hT^{-1}\brac[\bdryT]{\deltaXFT{k}\uluXT, \deltaXFT{k}\ulvXT} , \label{eq:stab.grad}\\
	\sXT^\partial(\uluXT, \ulvXT) \deq \hT^{-1}\brac[\bdryT]{(\deltaXFT{k}-\deltaXT{k})\uluXT, (\deltaXFT{k}-\deltaXT{k})\ulvXT} \label{eq:stab.bdry}
\end{align}
as defined in \cite[Section 4]{droniou.yemm:2022:robust} both satisfy Assumption \ref{assum:stability}. While this may at first seem contrived, it is quite natural to consider a singular enrichment function with zero Laplacian (see Section \ref{sec:numerical}) which clearly satisfies the aforementioned condition. 

\section{Implementation and Numerical Tests}\label{sec:numerical}

As mentioned in the introduction, singular solutions to \eqref{eq:weak.form} can arise from corners in the boundary of an otherwise smooth domain. In particular, at non-convex corners we cannot even assume \(H^2\)-regularity of the solution \cite[c.f.][]{grisvard:1985:elliptic}. Moreover, singularities on the boundary can arise from irregular boundary data, or a transition from Dirichlet to Neumann data. For simplicity, we consider here a domain \(\Omega\subset \R^2\) with one re-entrant corner located at the origin. Thus, after a possible rotation of coordinates, the domain \(\Omega\) corresponds with the region \(\{(x_1, x_2) = (r \cos \theta, r \sin \theta)|\,r > 0, 0 < \theta < \omega\}\) in some neighbourhood of the origin, where \(\pi < \omega \le 2\pi\) is the angle of the re-entrant corner. For each \(j\in\N\) we define a function
\begin{equation}\label{eq:singular.functions}
	u_j = 
	\begin{cases}
		r^\frac{j\pi}{\omega}\sin(\frac{j\pi}{\omega} \theta)&\qquad\textrm{if } \frac{j \pi}{\omega} \not\in \Z \\
		r^\frac{j\pi}{\omega}(\ln r\sin(\frac{j\pi}{\omega}\theta)+\theta \cos(\frac{j\pi}{\omega}\theta))&\qquad\textrm{if } \frac{j \pi}{\omega} \in \Z
	\end{cases}.
\end{equation}
Given a source term \(f\in H^k(\Omega)\), there exist numbers \(c_j\) such that the solution to the homogeneous Dirichlet problem \eqref{eq:weak.form} satisfies
\begin{equation}\label{eq:regularity.result}
	u - \sum_{1 \le j < \frac{\omega}{\pi}(k+1)} c_j u_j \in H^{k+2}(\Omega_0), 
\end{equation}
where \(\Omega_0\subset \Omega\) is some open neighbourhood of the origin. We refer the reader to \cite[Chapter 5]{grisvard:1985:elliptic}, which is dedicated to proving \eqref{eq:regularity.result} and equivalent results on polygonal domains with generic boundary data. Each singular function \(u_j\) clearly satisfy Assumptions \ref{item:A1} and \ref{item:A2} (the latter due to \(\Delta u_j = 0\)). On an edge \(F\in\Fh\) containing the singular point \(r=0\), the least regular function (\(j=1\)) satisfies \(\nabla u_1 \cdot \norF \in L^p(F)\) for all \(p < \frac{\omega}{\omega-\pi}\). Therefore, Assumption \ref{item:A3} holds true provided that \(\omega < 2\pi\) (corresponding to a cracked domain). 

In practice, to assure \(H^{k+2}\)-regularity in a polygonal domain, singular functions require to be defined at every corner. However, for computational simplicity, we consider only one singular function defined at the re-entrant corner. In particular, we consider here an L-shape domain \( \Omega = (-1,1)^2\,\backslash\,[0,1]^2 \) and exact solution
\begin{equation}
	u = \sin(\pi x_1) \sin(\pi x_2) + \psi, \nn
\end{equation}
where \(\psi = r^\alpha \sin(\alpha (\theta - \frac{\pi}{2}))\), \(\alpha = \frac{\pi}{\omega} = \frac23\) and \(\frac{\pi}{2} \le \theta \le 2 \pi\). Naturally, we define the enrichment space as \(W(\Th) = \textrm{span}\{\psi\}\). We remark that the exact solution considered does not have homogeneous boundary conditions. However, as mentioned in the introduction, the extension of the XHHO scheme to inhomogeneous boundary data follows seamlessly. Indeed, if we consider the inhomogeneous condition $u=g_D$ on $\partial \Omega$, we define $\ul{u}_{\xh,D}=\brac{\brac[T\in\Th]{0}, \brac[F\in\Th]{u_{\xF,D}}}\in\UXhk$ where
\[
	u_{\xF,D} = 
	\begin{cases}
	\piXFzr{k}g_D&\ \textrm{if } F\subset\partial \Omega \\
	0&\ \textrm{otherwise}
	\end{cases}. \nn
\] 
The inhomogeneous problem is given by: find $\ul{u}_{\xh,0}\in\UXhkzr$ such that
\[
	\aXh(\ul{u}_{\xh,0}, \ulvXh) = \brac[\Omega]{f, \vXh} - \aXh(\ul{u}_{\xh,D}, \ulvXh) \qquad \forall\ulvXh\in\UXhkzr,
\]
and the discrete solution is given by $\uluXh = \ul{u}_{\xh,0} + \ul{u}_{\xh,D}$.

\subsection{Basis Functions}

The Extended Hybrid High-Order scheme designed in this paper requires the computation of spaces \(\POLYX{k+1}(T)\) and \(\POLYD{k}(T)\) on each element \(T\in\Th\), and of the space \(\POLYN{k}(F)\) on each face \(F\in\Fh\). As the Laplacian of the singular function is \(0\), i.e. \(\Delta W(\Th) = \{0\}\), it holds that \(\POLYD{k}(T) = \POLY{k}(T) \subset \POLYX{k+1}(T)\). Therefore, on each mesh element we define basis functions for the space \(\POLYX{k+1}(T)\) and consider \(\POLYD{k}(T)\) a subspace.

Consider monomial basis functions, \(\{\phi_T^j\}_{j=1}^{\textrm{dim}\{\POLY{k+1}(T)\}}\) and \(\{\phi_F^j\}_{j=1}^{\textrm{dim}\{\POLY{k}(F)\}}\), in locally scaled coordinates for the respective polynomial spaces. On each mesh element we write the extended space as
\begin{equation}\label{eq:cell.basis}
	\POLYX{k+1}(T) = \textrm{span}\big\{ (\phi_T^j)_{j=1}^{\textrm{dim}\{\POLY{k+1}(T)\}}, \psi \big\}, 
\end{equation}
and we then \(L^2\)-orthonormalise the basis functions following a Gram-Schmidt process. It is natural to define a basis for the enriched space on the mesh faces as
\begin{equation}\label{eq:face.basis}
	\POLYN{k}(F) = \textrm{span}\big\{ (\phi_F^j)_{j=1}^{\textrm{dim}\{\POLY{k}(F)\}}, \nabla \psi \cdot \norF \big\}, 
\end{equation} 
for some choice of normal vector \(\norF\). However, there may exist certain faces \(F\in\Fh\) such that this choice of basis is not linearly independent. Consider a unit normal to a face \(F\in\Fh\) defined by \(\norF = (\cos \theta_\nor, \sin \theta_\nor)\) for some constant angle \(\theta_\nor\). Taking the scalar product with \(\nabla \psi\),
\begin{equation}\label{eq:grad.dot.n}
	\nabla \psi \cdot \norF = \alpha r^{\alpha - 1} \sin(\theta_\nor - \theta + \alpha(\theta - \frac{\pi}{2})).
\end{equation}
If \(\theta \equiv const\) then \(\theta_{\nor} = \theta + (2 j + 1) \frac{\pi}{2} \), for an integer \(j \in \N \). Substituting into \eqref{eq:grad.dot.n} yields 
\begin{equation}
	\nabla \psi \cdot \norF =  \pm\alpha r^{\alpha - 1} \cos(\alpha(\theta - \frac{\pi}{2})) \nn
\end{equation}
which is identically \(0\) if \(\theta \equiv \frac{(6j-1)\pi}{4}\) for some \(j\in\N\). Thus, as we only consider \(\frac{\pi}{2} \le \theta \le 2\pi\), the basis for \(\POLYN{k}(F)\) defined by \eqref{eq:face.basis} is linearly independent except along faces such that \(\theta = const = \frac{5\pi}{4}\) (for which \(\POLYN{k}(F) = \POLY{k}(F)\)). The bases on each face are then also \(L^2\)-orthonormalised via a Gram-Schmidt process.

\subsection{Integration Rules}

It is well known that classical Gauss-Legendre quadrature rules suffer from large errors and numerical instability near a singularity. To design an alternate numerical integration rule we make use of the fact that \(\psi\) is a homogeneous function of degree \(\alpha\). Thus, we can use a homogeneous integration rule (as described in \cite{chin.lasserre.ea:2015:numerical}) to rewrite the volumetric integrals on the boundary. In particular we consider the integral \( \int_T \psi \phi^j_T \)
for some \(1 \le j \le \textrm{dim}\{\POLY{k+1}(T)\}\). As \(\phi_T^j\) is a monomial in local coordinates \(\tilde{\bmx} = \frac{\bmx - \bmx_0}{\hT}\), it is homogeneous in \(\bmx - \bmx_0\). By Euler's homogeneous function theorem, \(\nabla \psi \cdot \bmx = \alpha \psi\) and \(\nabla \phi_T^j \cdot (\bmx-\bmx_0) = q \phi_T^j\), where \(q\) is the degree of the monomial \(\phi_T^j\). Therefore,
\begin{align}
	\int_{\bdryT} \psi \phi^j_T \bmx\cdot\norT \eq \int_T \psi \phi^j_T \nabla \cdot \bmx + \int_T \bmx \cdot\nabla (\psi \phi^j_T) \nl
	\eq (d + \alpha + q)\int_T \psi \phi^j_T + \int_T \psi\bmx_0 \cdot\nabla \phi^j_T, \nn
\end{align}
which yields
\begin{equation}
	(d + \alpha + q)\int_T \psi \phi^j_T = \sum_{F\in\Fh[T]}\bmx_F\cdot\norTF\int_F\psi \phi^j_T - \int_T \psi\bmx_0 \cdot\nabla \phi^j_T, \nn
\end{equation}
where \(\bmx_F\) is an arbitrary point in the face \(F\). As each component of \(\nabla \phi^j_T\) is a monomial of degree \(q- 1\), the above process can be repeated iteratively until the whole integral is described on the boundary. For edges that do not pass through the singular point \(r=0\), the integral \(\int_F\psi \phi^j_T\) can be accurately approximated using Gauss-Legendre quadrature. On edges \(F\in\Fh\) with \(\theta \equiv const\) (which include edges passing through the origin) the integral can be computed exactly. Consider the arc-length parametrisation \(\bmx = (r \cos(\theta), r \sin(\theta))\), \(R_0 \le r \le R_1\) and write
\begin{equation}
	\int_F\psi \phi^j_T = \int_{R_1}^{R_2} \psi(\bmx(r)) \phi_T^j(\bmx(r))\,dr. \nn
\end{equation} 
An integration by parts yields
\begin{align}
	r\psi(\bmx(r)) \phi_T^j(\bmx(r))\Big|_{r=R_1}^{R_2} \eq \int_{R_1}^{R_2} \psi  \phi_T^j \frac{dr}{dr}\,dr + \int_{R_1}^{R_2} r\frac{d}{dr}(\psi \phi_T^j)\,dr \nl
	\eq \int_{R_1}^{R_2} \psi \phi_T^j\,dr + \int_{R_1}^{R_2} r(\phi_T^j \nabla \psi \cdot \frac{d\bmx}{dr} + \psi \nabla \phi_T^j \cdot \frac{d\bmx}{dr})\,dr. \label{eq:integration.on.faces}
\end{align}
Noting that \(\bmx = r \frac{d\bmx}{dr}\), and again invoking the homogeneity results \(\nabla \psi \cdot \bmx = \alpha \psi\) and \(\nabla \phi_T^j \cdot (\bmx - \bmx_0) = q \phi_T^j\), equation \eqref{eq:integration.on.faces} is evaluated as
\begin{equation}
	\norm{\bmx}\psi(\bmx) \phi_T^j(\bmx)\Big|_{\bmx=\bmv_0}^{\bmv_1} = (1 + \alpha + q)\int_{R_1}^{R_2} \psi \phi_T^j\,dr + \int_{R_1}^{R_2} \psi \bmx_0 \cdot \nabla \phi_T^j\,dr, \nn
\end{equation}
where \(\bmv_0\) and \(\bmv_1\) are the vertices corresponding to \(r=R_0\) and \(r=R_1\) respectively. Again, we may follow an iterative procedure for each component of \(\int_{R_1}^{R_2} \psi \bmx_0 \cdot \nabla \phi_T^j\,dr\) to evaluate the entire integral \(\int_F\psi \phi^j_T\) on the vertices \(\bmv_0\), \(\bmv_1\). 

The calculations of integrals involving gradients is marginally simpler as they can be transformed directly onto the boundary via
\begin{equation}
	\int_T \nabla \psi \cdot \nabla \phi_T^j = \int_{\bdryT} \phi_T^j \nabla \psi \cdot \norT, \nn
\end{equation}
(due to \(\Delta \psi = 0\)). Each edge integral is calculated using the process described above, noting that each \(\nabla \psi \cdot \norTF\) is a homogeneous function of degree \(\alpha - 1\).

\subsection{Local Enrichment}\label{sec:local.enrichment}

Far from the singular point \(r=0\), the enrichment function \(\psi\) is smooth. More precisely, for any open set \(V\) containing the point \(r=0\), \(\psi \in C^\infty(\Omega\backslash V)\). As such, away from the singularity, \(\psi\) can be efficiently approximated by a polynomial. This can cause the local matrices appearing in the orthonormalisation process of the bases \eqref{eq:cell.basis} and \eqref{eq:face.basis} to be highly ill conditioned. Naturally, this leads us to consider a scheme such that the unknowns spaces are enriched near the singular point and are otherwise polynomial spaces. We follow a similar approach to that taken in \cite[Section 3.2]{artioli.mascotto:2021:enrichment}.

Consider a parameter \(\gamma > 0 \) and define the set
\begin{equation}
	\label{eq:enriched.elements}
	\mathcal{T}_{\gamma} \defeq \{ T \in \Th: \norm{{\bm{0}} - \bmx_T} < \gamma \},
\end{equation}
where we denote by \(\bmx_T\) the centroid of an element \(T\). A cut-off function \(\delta_{\gamma}:\Omega\to\R\) is defined by
\begin{equation}
	\delta_{\gamma}|_T = 
	\begin{cases}
	1&\ \textrm{if } T \in \mathcal{T}_{\gamma} \\
	0&\ \textrm{otherwise}
	\end{cases}. \nn
\end{equation}
If \(u=u_r + \psi\), with \(u_r \in C^\infty(\Omega)\) we can write \(u = u_r + \psi - \delta_{\gamma}\psi + \delta_{\gamma}\psi = \tilde{u}_r + \delta_{\gamma}\psi\) where \(u_r + \psi - \delta_{\gamma}\psi = \tilde{u}_r \in H^{k+2}(\Th)\). Thus, we define the enrichment space as \(W(\Th) = \textrm{span}\{\delta_{\gamma}\psi\}\). As the assumptions on the enrichment space are all made locally, \(W(\Th)\) clearly still satisfies Assumption \ref{assum:regularity}. Definitions \eqref{def:extended.space} and \eqref{eq:element.and.face.spaces} of the discrete spaces then correspond to
\begin{align}
	\POLYX{k+1}(T) \eq
	\begin{cases}
	\POLY{k+1}(T) + \textrm{span}\{\psi\}&\ \textrm{if } T \in \mathcal{T}_{\gamma} \\
	\POLY{k+1}(T)&\ \textrm{otherwise}
	\end{cases}\nl
	\POLYN{k}(F) \eq
	\begin{cases}
	\POLY{k}(F) + \textrm{span}\{\nabla\psi\cdot\norF\}&\ \textrm{if } F \in \mathcal{F}_{\gamma} \\
	\POLY{k}(F)&\ \textrm{otherwise}
	\end{cases} \nn
\end{align}
for an arbitrary normal \(\norF\), where
\begin{equation}
	\mathcal{F}_{\gamma} = \{ F \in \Fh: \Th[F]\cap\mathcal{T}_\gamma \ne \emptyset \}. \nn
\end{equation}

\subsection{Tests}

As mentioned previously, we consider an  L-shape domain \( \Omega = (-1,1)^2\,\backslash\,[0,1]^2 \) and exact solution \(u = \sin(\pi x_1) \sin(\pi x_2) + r^\alpha \sin(\alpha( \theta - \frac{\pi}{2}))\), $\alpha=\frac23$. The stabilisation term defined by \eqref{def:vol.stab} is considered. As the element unknowns are polynomials, more choices of stabilisation terms exist (see Section \ref{sec:stab}). A comparison between the various stabilisation terms is carried out for the standard HHO method in \cite[Section 5]{droniou.yemm:2022:robust}. The XHHO scheme is tested on a sequence of hexagonal meshes with maximum element diameter \(h\to 0\). Two members of this family are plotted in Figure \ref{fig:mesh.Lshape.hexa3} and the mesh data is shown in Table \ref{table:mesh.data}.
\begin{figure}[h]
	\centering
			\includegraphics[width=4\textwidth/9]{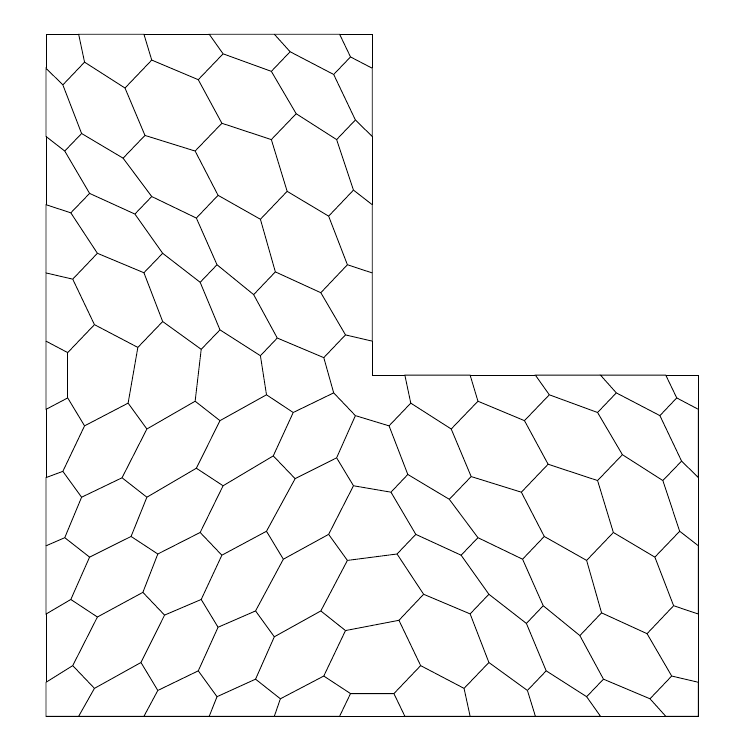} 
			\includegraphics[width=4\textwidth/9]{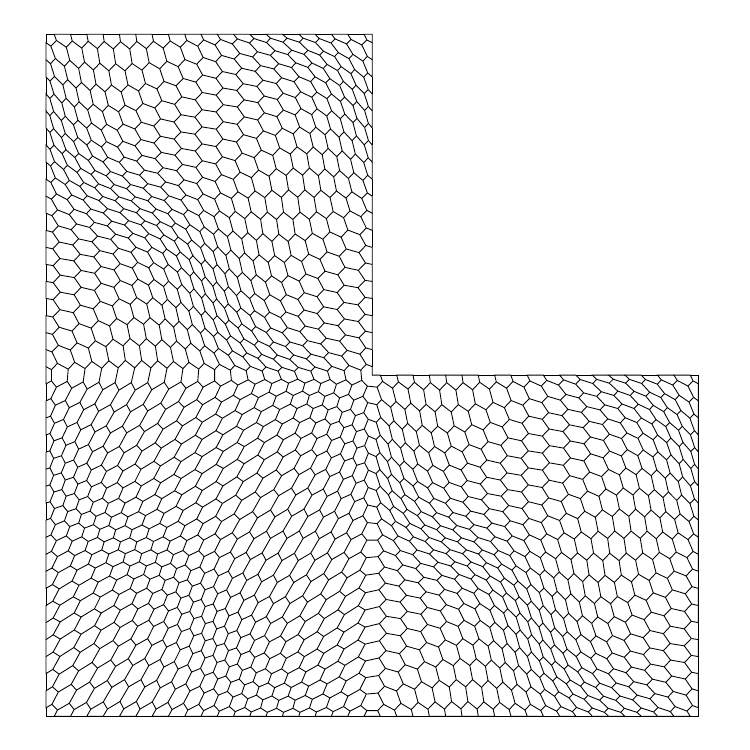} 
	\caption{Two members of the mesh sequence}
	\label{fig:mesh.Lshape.hexa3}
\end{figure}
\begin{table}[h]
	\centering
	\pgfplotstableread{data/unenriched_k2.dat}\loadedtable
	\pgfplotstabletypeset
	[
	columns={MeshSize,NbCells,NbInternalEdges}, 
	columns/MeshSize/.style={column name=$h$,/pgf/number format/.cd,fixed zerofill,precision=4},
	columns/NbCells/.style={column name=Nb. Elements},
	columns/NbInternalEdges/.style={column name=Nb. Internal Edges},
	every head row/.style={before row=\toprule,after row=\midrule},
	every last row/.style={after row=\bottomrule} 
	]\loadedtable
	\caption{Parameters of the mesh sequence}
	\label{table:mesh.data}
\end{table}

The conditioning of the system poses a significant challenge to the scheme. This is a common problem for enriched schemes and the matter is discussed in detail for the enriched NCVEM \cite{artioli.mascotto:2021:enrichment}. For a \(T\in\Th\), let us denote by \( \{\phi_T^j\}_{j=1}^{\dim\{\POLYX{k+1}(T)\}} \) the basis for \(\POLYX{k+1}(T)\) prior to orthonormalising. The mass matrix is defined by
\begin{equation}
	(\mat{M}_T)_{i,j} = \brac[T]{\phi_T^i,\phi_T^j}. \nn
\end{equation}
The maximum condition number of the element mass matrices is defined by \(\mathcal{C}\defeq\max_{T\in\Th} \frac{\lambda_{T,max}}{\lambda_{T,min}}\), where \(\lambda_{T,max}\) and \(\lambda_{T,min}\) denote the maximum and minimum eigen values of \(\mat{M}_T\) respectively. The parameter \(\mathcal{C}\) gives a measure of how linearly independent the worst performing basis is. In Table \ref{table:condition.number} we present the maximum condition number on the finest mesh in the sequence with various values of \(k\) and \(\gamma\) (where \(\gamma=0\) corresponds to an non-enriched scheme). It is clear that the conditioning of the scheme is significantly worse for enriched schemes than non-enriched schemes and gets progressively worse with increasing \(k\) and \(\gamma\). We note that once \(\mathcal{C}\sim 10^{16}\) the orthonormalisation process (and thus the scheme itself) fails due to division by numbers which are numerically zero.

\begin{table}[H]
	\centering
	
	\pgfplotstableread{data/condition-0.dat}\loadedtable	
	\pgfplotstabletypeset
	[
	columns={Gamma,CellCondition},
	columns/CellCondition/.style={column type/.add={|}{},column name=\(k\eqs0\),/pgf/number format/sci,zerofill,precision=3},
	columns/Gamma/.style={string type, column type = {l},column name=},
	every head row/.style={after row=\midrule}
	]\loadedtable	
	\pgfplotstableread{data/condition-1.dat}\loadedtable
	\pgfplotstabletypeset
	[
	columns={CellCondition},
	columns/CellCondition/.style={column name=\(k\eqs1\),/pgf/number format/sci,zerofill,precision=3},
	every head row/.style={after row=\midrule}
	]\loadedtable	
	\pgfplotstableread{data/condition-2.dat}\loadedtable
	\pgfplotstabletypeset
	[
	columns={CellCondition},
	columns/CellCondition/.style={column name=\(k\eqs2\),/pgf/number format/sci,zerofill,precision=3},
	every head row/.style={after row=\midrule}
	]\loadedtable	
	\pgfplotstableread{data/condition-3.dat}\loadedtable
	\pgfplotstabletypeset
	[
	columns={CellCondition},
	columns/CellCondition/.style={column name=\(k\eqs3\),/pgf/number format/sci,zerofill,precision=3},
	every head row/.style={after row=\midrule}
	]\loadedtable
	
	\caption{Maximum local condition number with \(h\approx 0.0257\)}
	\label{table:condition.number}
\end{table}

Denote by \(\uluXh \in\UXhkzr\) the exact solution to the discrete problem \eqref{eq:discrete.problem}. The relative error of the scheme is determined via the following three quantities,
\begin{align}
E_{0,\Th} \deq \Big[\frac{\sum_{T\in\Th}\norm[T]{\uXT-\piXTzr{k} u}^2}{\sum_{T\in\Th}\norm[T]{\piXTzr{k} u}^2}\Big]^\frac12 + \Big[\frac{\sum_{F\in\Fh}h_F\norm[F]{\uXF-\piXFzr{k} u}^2}{\sum_{F\in\Fh}h_F\norm[F]{\piXFzr{k} u}^2}\Big]^\frac12 \nl
E_{1,\Th} \deq \Big[\frac{\sum_{T\in\Th}\seminorm[H^1(T)]{\pXT{k+1}\uluXT - \piXTe{k+1}  u}^2}{\sum_{T\in\Th}\seminorm[H^1(T)]{\piXTe{k+1}  u}^2}\Big]^\frac12 \nl
E_{\a,\xh} \deq \frac{\energynorm{\uluXh - \IXhk u}}{\energynorm{\IXhk u}}. \nn
\end{align}

Unlike \cite{artioli.mascotto:2021:enrichment}, we do not observe saturation in error rates as the condition number gets large, but rather an instantaneous failure of the scheme. As such we would like to choose a cut-off value \(\gamma\) that is as large as possible and does not result in system failure. In Figures \ref{fig:test.k1} and \ref{fig:test.k2} we test convergence of the scheme with \(k=1\) and \(k=2\) respectively. The enriched scheme performs significantly better than the non-enriched scheme, particularly in \(H^1\)-error (\(E_{1,\Th}\)) and for higher polynomial degree \(k\). In Figure \ref{fig:test.k2}, we test two cut-off values (\(\gamma=0.075\) and \(\gamma=0.15\)). The larger cut-off value performs slightly better in \(H^1\)-error, however, the scheme fails on the final mesh.

\begin{figure}[H]
	\centering
	\ref{k1}
	\vspace{0.5cm}\\
	\subcaptionbox{$E_{0,\Th}$ vs $h$}
	{
		\begin{tikzpicture}[scale=0.56]
		\begin{loglogaxis}[ legend columns=3, legend to name=k1 ]
		\addplot table[x=MeshSize,y=L2Error] {data/unenriched_k1.dat};
		\addplot table[x=MeshSize,y=L2Error] {data/local_k1_cut_015.dat};
		\logLogSlopeTriangle{0.90}{0.4}{0.1}{3}{black};
		\legend{Non-enriched, Locally enriched $(\gamma = 0.15)$};
		\end{loglogaxis}
		\end{tikzpicture}
	}
	\subcaptionbox{$E_{1,\Th}$ vs $h$}
	{
		\begin{tikzpicture}[scale=0.56]
		\begin{loglogaxis}
		\addplot table[x=MeshSize,y=H1Error] {data/unenriched_k1.dat};
		\addplot table[x=MeshSize,y=H1Error] {data/local_k1_cut_015.dat};
		\logLogSlopeTriangle{0.90}{0.4}{0.1}{2}{black};
		\end{loglogaxis}
		\end{tikzpicture}
	}
	\subcaptionbox{$E_{\a,\xh}$ vs $h$}
	{
		\begin{tikzpicture}[scale=0.56]
		\begin{loglogaxis}
		\addplot table[x=MeshSize,y=EnergyError] {data/unenriched_k1.dat};
		\addplot table[x=MeshSize,y=EnergyError] {data/local_k1_cut_015.dat};
		\logLogSlopeTriangle{0.90}{0.4}{0.1}{2}{black};
		\end{loglogaxis}
		\end{tikzpicture}
	}
	\caption{Error vs $h$, \(k=1\)}
	\label{fig:test.k1}
\end{figure}
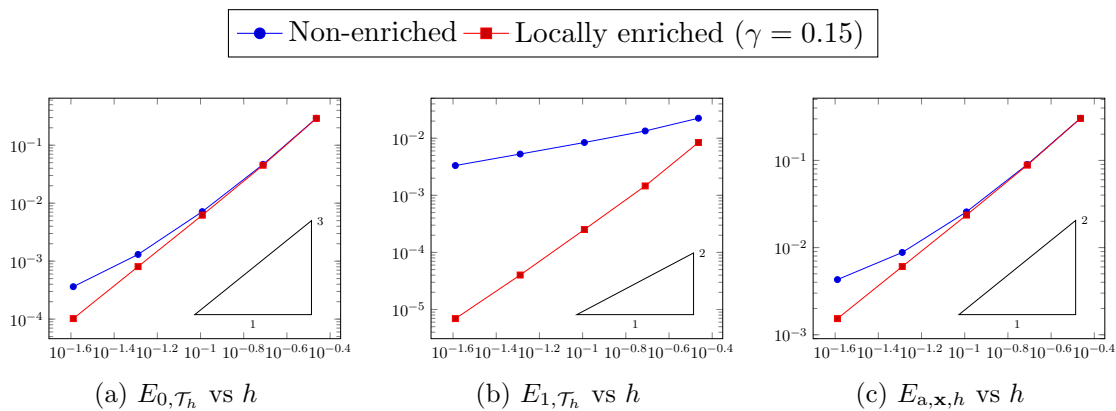

\begin{figure}[H]
	\centering
	\ref{k2}
	\vspace{0.5cm}\\
	\subcaptionbox{$E_{0,\Th}$ vs $h$}
	{
		\begin{tikzpicture}[scale=0.56]
		\begin{loglogaxis}[ legend columns=3, legend to name=k2 ]
		\addplot table[x=MeshSize,y=L2Error] {data/unenriched_k2.dat};
		\addplot table[x=MeshSize,y=L2Error] {data/local_k2_cut_0075.dat};
		\addplot table[x=MeshSize,y=L2Error] {data/local_k2_cut_015.dat};
		\logLogSlopeTriangle{0.90}{0.4}{0.1}{4}{black};
		\legend{Non-enriched, Locally enriched ($\gamma = 0.075$), Locally Enriched ($\gamma = 0.15$)};
		\end{loglogaxis}
		\end{tikzpicture}
	}
	\subcaptionbox{$E_{1,\Th}$ vs $h$}
	{
		\begin{tikzpicture}[scale=0.56]
		\begin{loglogaxis}
		\addplot table[x=MeshSize,y=H1Error] {data/unenriched_k2.dat};
		\addplot table[x=MeshSize,y=H1Error] {data/local_k2_cut_0075.dat};
		\addplot table[x=MeshSize,y=H1Error] {data/local_k2_cut_015.dat};
		\logLogSlopeTriangle{0.90}{0.4}{0.1}{3}{black};
		\end{loglogaxis}
		\end{tikzpicture}
	}
	\subcaptionbox{$E_{\a,\xh}$ vs $h$}
	{
		\begin{tikzpicture}[scale=0.56]
		\begin{loglogaxis}
		\addplot table[x=MeshSize,y=EnergyError] {data/unenriched_k2.dat};
		\addplot table[x=MeshSize,y=EnergyError] {data/local_k2_cut_0075.dat};
		\addplot table[x=MeshSize,y=EnergyError] {data/local_k2_cut_015.dat};
		\logLogSlopeTriangle{0.90}{0.4}{0.1}{3}{black};
		\end{loglogaxis}
		\end{tikzpicture}
	}
	\caption{Error vs $h$, \(k=2\)}
	\label{fig:test.k2}
\end{figure}
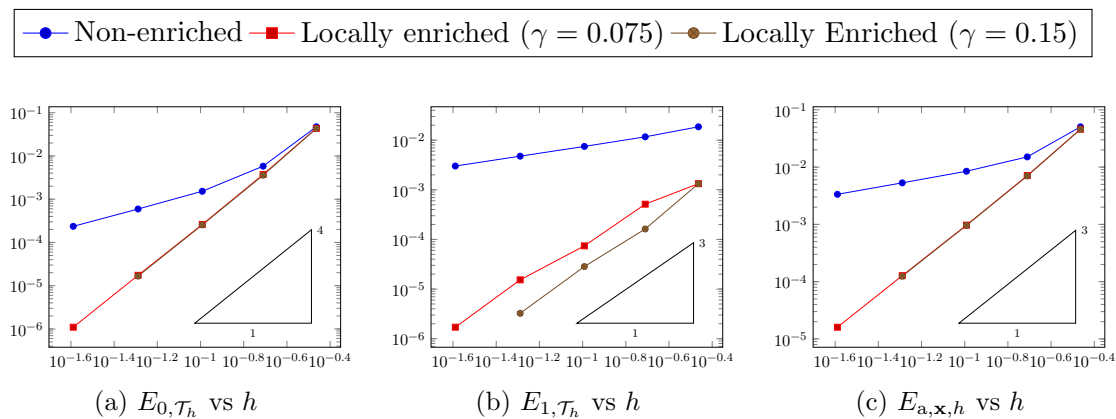

A standard error estimate with respect to $k$ for a classical HHO scheme is given by
\begin{equation}\label{eq:error.estimate.k}
\norm[\a,h]{\uluh - \Ihk u} \lesssim \frac{h^{k+1}}{(k+1)^k} \seminorm[\HS{k+2}(\Th)]{u}
\end{equation}
where the hidden constant is independent of $k$ \cite[c.f.][Theorem 3.3]{aghili.di-pietro.ea:2017:hp}. Therefore, if
\begin{equation}\label{eq:sovolev.seminorm.bound}
	\frac{\seminorm[\HS{k+2}(\Th)]{u}}{(k+1)^k} \le C
\end{equation}
for some $C$ independent of $k$, then the scheme should converge exponentially with respect to $k$. 
In Figure \ref{fig:test.h2}, we fix the mesh (the second mesh in Table \ref{table:mesh.data}) and test convergence as \(k\) increases. In this case, the polynomial spaces are enriched only on the element containing the singular point and on its faces. While convergence with respect to $k$ has not been proven in this work, equation \eqref{eq:error.estimate.k} provides a bench mark to compare the tests to. As such, we plot on a log-linear scale and include a line of slope $\log h$. Again, it is quite clear that the enriched scheme (albeit enriched on a single element) performs much better than the non-enriched scheme. However, it is apparent that even the enriched scheme does not converge exponentially. By only enriching the element containing the singular point, there exist non-enriched elements which are `close' to the singularity. As such, the Sobolev seminorms will grow quite quickly as $k$ increases. Therefore, a large $k$ may be required before the boundedness \eqref{eq:sovolev.seminorm.bound} is apparent. This explains why exponential convergence of the enriched scheme is not observed in Figure \ref{fig:test.h2}.

\begin{figure}[H]
	\centering
	\ref{ktest}
	\vspace{0.5cm}\\
	\subcaptionbox{$E_{0,\Th}$ vs $k$}
	{
		\begin{tikzpicture}[scale=0.57]
		\begin{semilogyaxis}[ legend columns=3, legend to name=ktest ]
		\addplot table[x=EdgeDegree,y=L2Error] {data/ktest_hexa2_unenriched.dat};
		\addplot table[x=EdgeDegree,y=L2Error] {data/ktest_hexa2_enriched.dat};
		\addplot[mark=none,densely dotted,blue] table[x=EdgeDegree,y=LogH_slope] {data/test_file.dat};
		\legend{Non-enriched, Enriched on a single element, $h^k$};
		\end{semilogyaxis}
		\end{tikzpicture}
	}
	\subcaptionbox{$E_{1,\Th}$ vs $k$}
	{
		\begin{tikzpicture}[scale=0.57]
		\begin{semilogyaxis}
		\addplot table[x=EdgeDegree,y=H1Error] {data/ktest_hexa2_unenriched.dat};
		\addplot table[x=EdgeDegree,y=H1Error] {data/ktest_hexa2_enriched.dat};
		\addplot[mark=none,densely dotted,blue] table[x=EdgeDegree,y=LogH_slope] {data/test_file.dat};
		\end{semilogyaxis}
		\end{tikzpicture}
	}
	\subcaptionbox{$E_{\a,\xh}$ vs $k$}
	{
		\begin{tikzpicture}[scale=0.57]
		\begin{semilogyaxis}
		\addplot table[x=EdgeDegree,y=EnergyError] {data/ktest_hexa2_unenriched.dat};
		\addplot table[x=EdgeDegree,y=EnergyError] {data/ktest_hexa2_enriched.dat};
		\addplot[mark=none,densely dotted,blue] table[x=EdgeDegree,y=LogH_slope] {data/test_file.dat};
		\end{semilogyaxis}
		\end{tikzpicture}
	}
	\caption{Error vs $k$, $h\approx 0.195$}
	\label{fig:test.h2}
\end{figure}
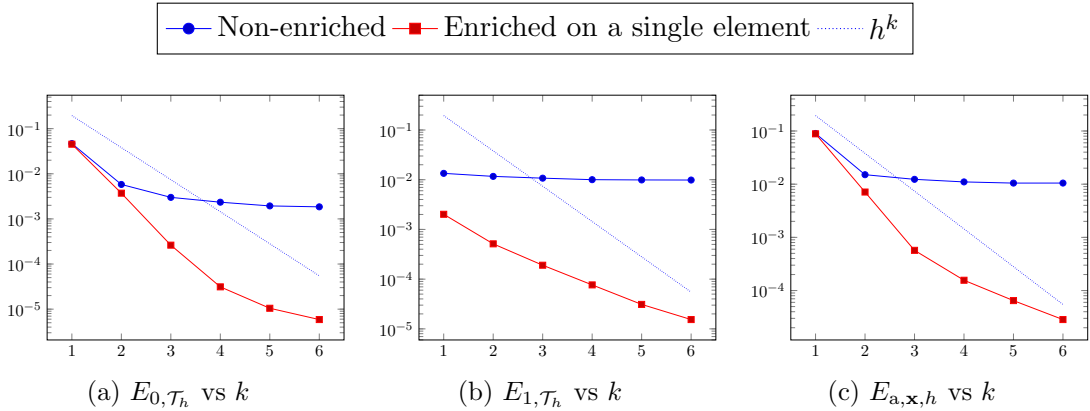

In Figure \ref{fig:cartesian.mesh} we consider two local enrichment schemes on a uniform Cartesian mesh with 48 elements. The elements which are enriched with the singular functions are bordered in red. Due to the larger element diameters and `rounder' element geometries we are able to consider a much larger radius of enrichment without ill-conditioning causing the scheme to fail for large $k$. As such, the non-enriched elements are further from the singular point (particularly in scheme 2) so we expect the high-order Sobolev seminorms to be smaller. We plot convergence of the schemes with respect to $k$ in Figure \ref{fig:ktest.cartesian}. It appears that the locally enriched scheme 2 maintains exponential convergence with respect to $k$ in both $L^2$ and energy error. This is similar to results in enriched NCVEM \cite[Figure 18]{artioli.mascotto:2021:enrichment} where exponential convergence of a globally enriched scheme on a coarse Voronoi mesh is observed. While the $H^1$ error of scheme 2 does observe a minor saturation in convergence rate, the rate is consistently better than $h^k$.  It appears that the convergence rate of scheme 1 becomes slightly sub-optimal as $k$ gets large due to the existence of non-enriched elements closer to the singular point. 

\begin{figure}[H]
	\centering
	\subcaptionbox{Local enrichment scheme 1.}
	{
		\includegraphics[width=0.3\textwidth]{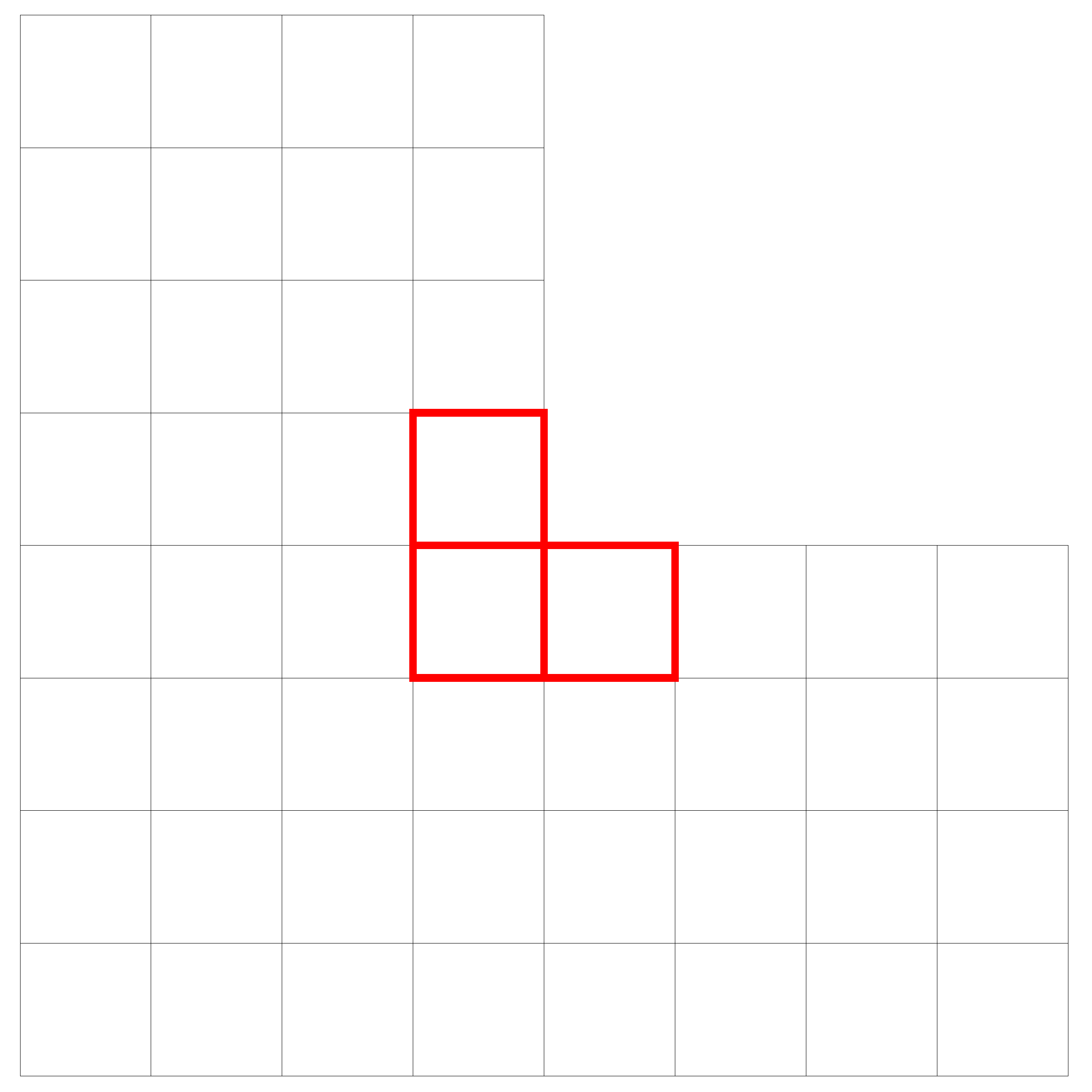}
	}
${}\qquad{}$
	\subcaptionbox{Local enrichment scheme 2.}
	{
	\includegraphics[width=0.3\textwidth]{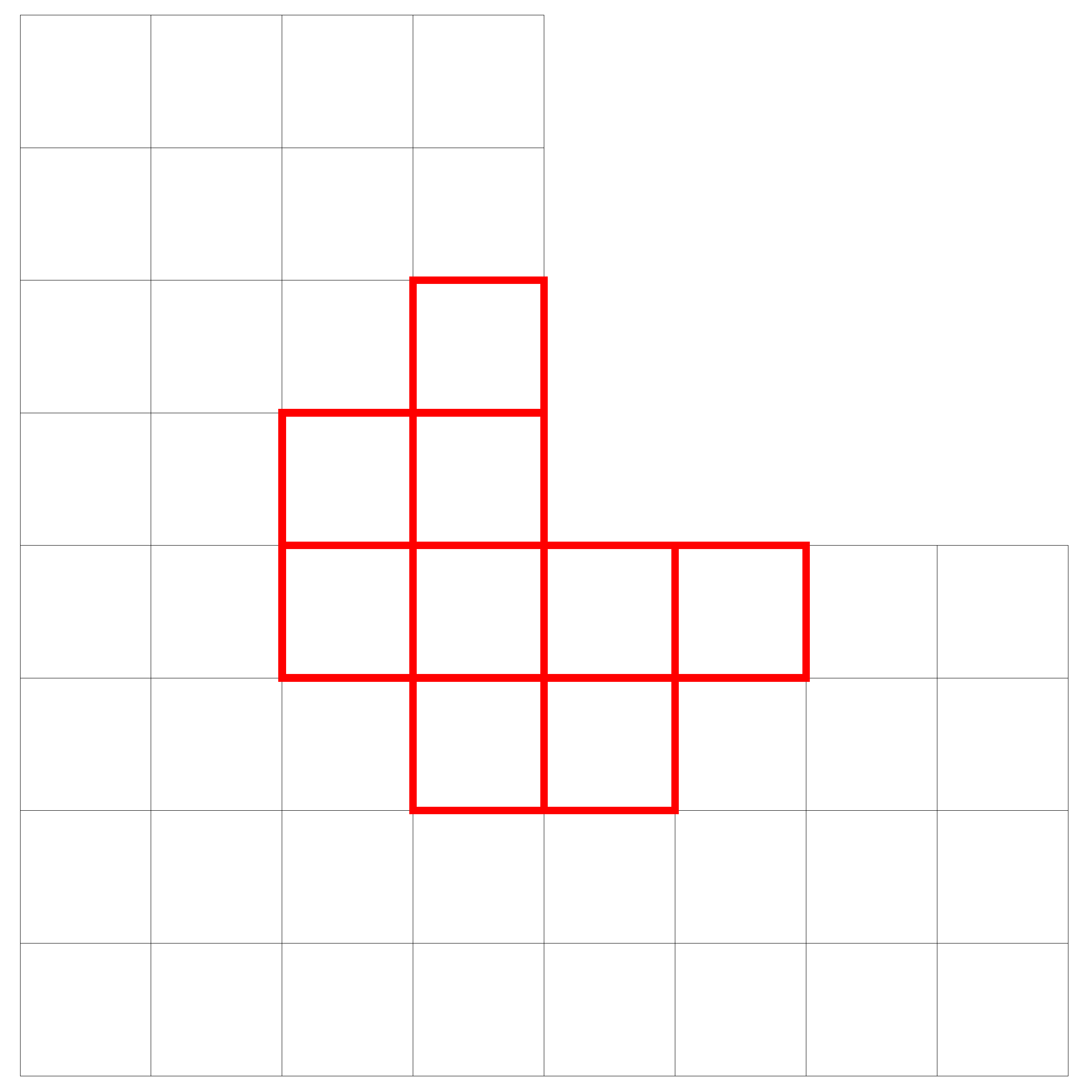}
	}
\caption{Uniform Cartesian mesh with enriched elements bordered in red}
\label{fig:cartesian.mesh}
\end{figure}

\begin{figure}[H]
\centering
\ref{ktest.cartesian}
\vspace{0.5cm}\\
\subcaptionbox{$E_{0,\Th}$ vs $k$}
{
	\begin{tikzpicture}[scale=0.57]
	\begin{semilogyaxis}[ legend columns=4, legend to name=ktest.cartesian, xtick={0,1,2,3,4,5,6,7}, xticklabels={0,1,2,3,4,5,6,7} ]
	\addplot table[x=EdgeDegree,y=L2Error] {data/ktest_cartesian_2_nonenriched.dat};
	\addplot table[x=EdgeDegree,y=L2Error] {data/ktest_cartesian_2_locally_enriched_1.dat};
	\addplot table[x=EdgeDegree,y=L2Error] {data/ktest_cartesian_2_locally_enriched_2.dat};
	\addplot[mark=none,densely dotted,blue] table[x=EdgeDegree,y=LogH_slope] {data/logh_cartesian_2.dat};
	\legend{Non-enriched, Locally enriched (scheme 1), Locally enriched (scheme 2), $h^k$};
	\end{semilogyaxis}
	\end{tikzpicture}
}
\subcaptionbox{$E_{1,\Th}$ vs $k$}
{
	\begin{tikzpicture}[scale=0.57]
	\begin{semilogyaxis}[xtick={0,1,2,3,4,5,6,7}, xticklabels={0,1,2,3,4,5,6,7}]
	\addplot table[x=EdgeDegree,y=H1Error] {data/ktest_cartesian_2_nonenriched.dat};
	\addplot table[x=EdgeDegree,y=H1Error] {data/ktest_cartesian_2_locally_enriched_1.dat};
	\addplot table[x=EdgeDegree,y=H1Error] {data/ktest_cartesian_2_locally_enriched_2.dat};
	\addplot[mark=none,densely dotted,blue] table[x=EdgeDegree,y=LogH_slope] {data/logh_cartesian_2.dat};
	\end{semilogyaxis}
	\end{tikzpicture}
}
\subcaptionbox{$E_{\a,\xh}$ vs $k$}
{
	\begin{tikzpicture}[scale=0.57]
	\begin{semilogyaxis}[xtick={0,1,2,3,4,5,6,7}, xticklabels={0,1,2,3,4,5,6,7}]
	\addplot table[x=EdgeDegree,y=EnergyError] {data/ktest_cartesian_2_nonenriched.dat};
	\addplot table[x=EdgeDegree,y=EnergyError] {data/ktest_cartesian_2_locally_enriched_1.dat};
	\addplot table[x=EdgeDegree,y=EnergyError] {data/ktest_cartesian_2_locally_enriched_2.dat};
	\addplot[mark=none,densely dotted,blue] table[x=EdgeDegree,y=LogH_slope] {data/logh_cartesian_2.dat};
	\end{semilogyaxis}
	\end{tikzpicture}
}
\caption{Error vs $k$, Cartesian mesh, $h\approx 0.354$}
\label{fig:ktest.cartesian}
\end{figure}
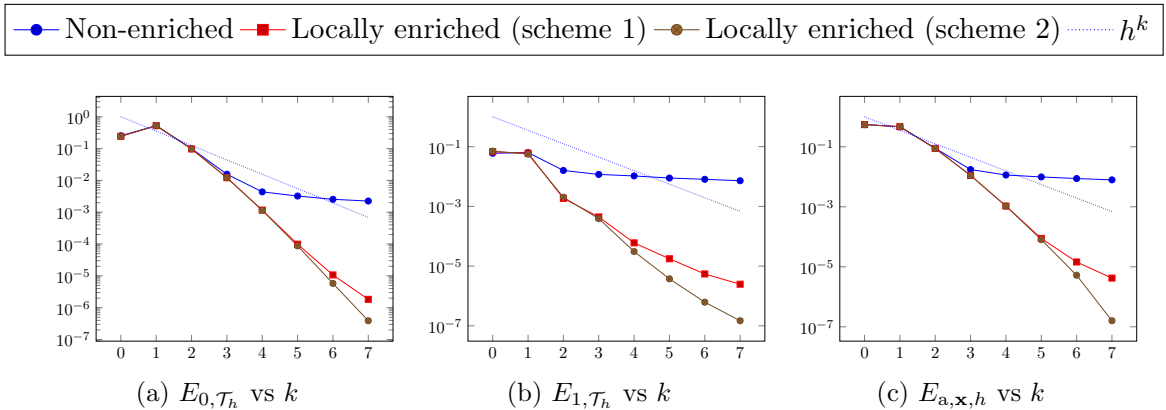

\subsection{Oscillatory Solution}\label{sec:oscillatory}
We briefly consider here an application of the XHHO scheme to highly oscillatory solutions of problem \eqref{eq:weak.form}. If the source term consists of a highly oscillatory component, it is expected that the solution will reflect this. In standard HHO, such details can only be captured with either high-order approximations or fine meshes. The XHHO method, however, is able to obtain improved error estimates by enriching the local spaces with an oscillatory function.

Let us consider here a square domain $\Omega=(0,1)^2$ and exact solution
\begin{equation}\label{eq:oscillatory.solution}
	u = \sin(\pi x_1) \sin(\pi x_2) + \sin(\frac{1}{\hat{r}^2 + \epsilon})
\end{equation}
with $\hat{r}^2=(x_1 - 0.5)^2 + (x_2 - 0.5)^2$ denoting the radial distance to the domain center and $\epsilon > 0$ a small constant. We note that equation \eqref{eq:oscillatory.solution} does not actually define a singular solution, thus the standard HHO method is expected to converge optimally. However, as the multiplicative constants in the error depend on the $(k+2)$-th derivative of the solution, the error is expected to get very large as $\epsilon \to 0$.

The oscillatory component of \eqref{eq:oscillatory.solution} is denoted by
\[
	\psi \defeq \sin(\frac{1}{\hat{r}^2 + \epsilon}).
\]
The enrichment space is naturally defined as $W(T) = \textrm{span}\{\psi|_T\}$. We note that as $\Delta \psi \ne 0$ we are no longer able to consider $\POLYD{k}(T) = \POLY{k}(T)$. We take $\epsilon = 0.05$ and consider both non-enriched and locally enriched schemes (here, the local enrichment process is identical to that described in Section \ref{sec:local.enrichment}, except with the singular point taken as $\hat{r}=0$ rather than $r=0$). We consider a sequence of regular, triangular meshes and plot the results in Figures \ref{fig:oscillator_k0} and \ref{fig:oscillator_k1} for $k=0,1$. It is clear that while both schemes converge optimally, the absolute error of the non-enriched scheme can be several orders of magnitude worse than that of the locally enriched scheme.
\begin{figure}[H]
	\centering
	\ref{oscillator_k0}
	\vspace{0.5cm}\\
	\subcaptionbox{$E_{0,\Th}$ vs $h$}
	{
		\begin{tikzpicture}[scale=0.57]
		\begin{loglogaxis}[ legend columns=2, legend to name=oscillator_k0 ]
		\addplot table[x=MeshSize,y=L2Error] {data/oscillator_tri_mesh_unenriched_k0.dat};
		\addplot table[x=MeshSize,y=L2Error] {data/oscillator_tri_mesh_cut_05_k0.dat};
		\logLogSlopeTriangle{0.90}{0.4}{0.1}{2}{black};
		\legend{Non-enriched, Locally enriched ($\gamma=0.5$)};
		\end{loglogaxis}
		\end{tikzpicture}
	}
	\subcaptionbox{$E_{1,\Th}$ vs $h$}
	{
		\begin{tikzpicture}[scale=0.57]
		\begin{loglogaxis}
		\addplot table[x=MeshSize,y=H1Error] {data/oscillator_tri_mesh_unenriched_k0.dat};
		\addplot table[x=MeshSize,y=H1Error] {data/oscillator_tri_mesh_cut_05_k0.dat};
		\logLogSlopeTriangle{0.90}{0.4}{0.1}{1}{black};
		\end{loglogaxis}
		\end{tikzpicture}
	}
	\subcaptionbox{$E_{\a,\xh}$ vs $h$}
	{
		\begin{tikzpicture}[scale=0.57]
		\begin{loglogaxis}
		\addplot table[x=MeshSize,y=EnergyError] {data/oscillator_tri_mesh_unenriched_k0.dat};
		\addplot table[x=MeshSize,y=EnergyError] {data/oscillator_tri_mesh_cut_05_k0.dat};
		\logLogSlopeTriangle{0.90}{0.4}{0.1}{1}{black};
		\end{loglogaxis}
		\end{tikzpicture}
	}
	\caption{Error vs $h$, $k=0$}
	\label{fig:oscillator_k0}
\end{figure}
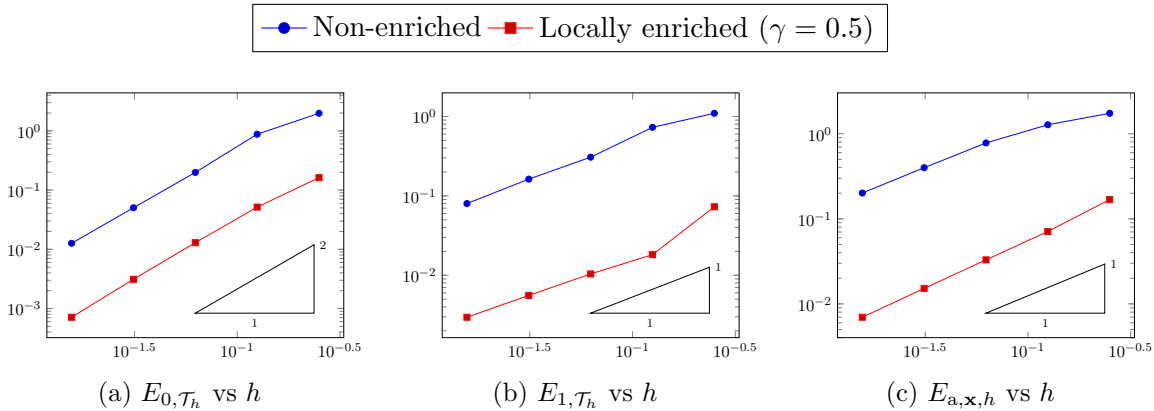

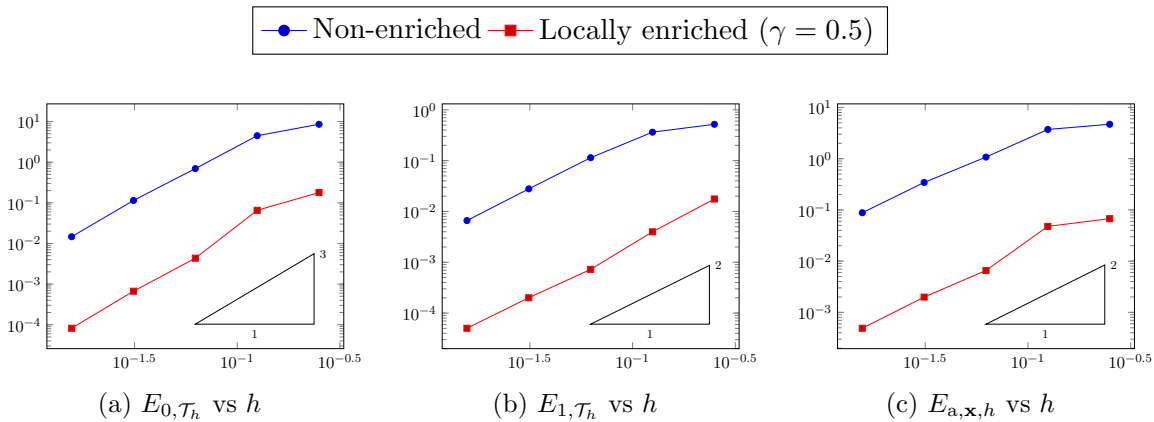
\begin{figure}[H]
\centering
\ref{oscillator_k1}
\vspace{0.5cm}\\
\subcaptionbox{$E_{0,\Th}$ vs $h$}
{
	\begin{tikzpicture}[scale=0.57]
	\begin{loglogaxis}[ legend columns=2, legend to name=oscillator_k1 ]
	\addplot table[x=MeshSize,y=L2Error] {data/oscillator_tri_mesh_unenriched_k1.dat};
	\addplot table[x=MeshSize,y=L2Error] {data/oscillator_tri_mesh_cut_05_k1.dat};
	\logLogSlopeTriangle{0.90}{0.4}{0.1}{3}{black};
	\legend{Non-enriched, Locally enriched ($\gamma=0.5$)};
	\end{loglogaxis}
	\end{tikzpicture}
}
\subcaptionbox{$E_{1,\Th}$ vs $h$}
{
	\begin{tikzpicture}[scale=0.57]
	\begin{loglogaxis}
	\addplot table[x=MeshSize,y=H1Error] {data/oscillator_tri_mesh_unenriched_k1.dat};
	\addplot table[x=MeshSize,y=H1Error] {data/oscillator_tri_mesh_cut_05_k1.dat};
	\logLogSlopeTriangle{0.90}{0.4}{0.1}{2}{black};
	\end{loglogaxis}
	\end{tikzpicture}
}
\subcaptionbox{$E_{\a,\xh}$ vs $h$}
{
	\begin{tikzpicture}[scale=0.57]
	\begin{loglogaxis}
	\addplot table[x=MeshSize,y=EnergyError] {data/oscillator_tri_mesh_unenriched_k1.dat};
	\addplot table[x=MeshSize,y=EnergyError] {data/oscillator_tri_mesh_cut_05_k1.dat};
	\logLogSlopeTriangle{0.90}{0.4}{0.1}{2}{black};
	\end{loglogaxis}
	\end{tikzpicture}
}
\caption{Error vs $h$, $k=1$}
\label{fig:oscillator_k1}
\end{figure}

\section{Conclusion}

In this paper we have introduced and analysed the Extended Hybrid High-Order method for the Poisson problem. The method is shown to be robust, and capable of handling generic singularities satisfying Assumption \ref{assum:regularity} in arbitrary dimensions. Optimal error estimates are established in both $H^1$- and discrete energy norms. The error analysis is backed up by numerical simulations which show the Extended Hybrid High-Order method to be a viable technique for handling irregular solutions. The method is also applicable to any situation where some aspects of the solution can be (at least locally) determined. This is seen in Section \ref{sec:oscillatory} where the XHHO method is applied to solutions possessing a highly oscillatory component. Indeed, the enrichment function considered is actually smooth, yet a considerable improvement in performance was observed which highlights the versatility of the method for any cases where there is prior knowledge of the solution behaviour. The scheme also has natural generalisations to more general linear elliptic problems of the form $Lu=f$.

The case of corner singularities is discussed and tested in detail in Section \ref{sec:numerical}. The schemes are shown to converge optimally and perform significantly better than standard HHO for solutions possessing a weak singularity at a re-entrant corner in two dimensions. The XHHO method is capable of handling such irregularities for all but the limiting case of the slit domain. For such cases, further work into developing a robust Hybrid High-Order method is required. In three dimensions, the singularities arising from irregular geometries can be more complicated. Along an edge which forms a non-convex corner in the domain, the singularity can be written as the product of a smooth function and a weakly singular function whose behaviour perpendicular to the edge is described by the two-dimensional case (c.f. \cite{grisvard:1992:singularities}). Again, this scheme is capable of modelling such irregularity for all but the slit domain. 



\section*{Declarations}
The author declares that they have no conflict of interest.



\bibliographystyle{plain}
\bibliography{references}

\end{document}